\newcommand{\be}{\begin{equation}}
\newcommand{\ee}{\end{equation}}
\newcommand{\benn}{\begin{equation*}}
\newcommand{\eenn}{\end{equation*}}
\newtheorem{lem}{Lemma}[section]
\newtheorem{rem}{Remark}[section]
\newtheorem{thm}{Theorem}[section]
\newtheorem{defn}{Definition}
\newtheorem{example}{Example}[section]
\def\bx {\boldsymbol{x}}
\def\by {\boldsymbol{y}}
\def\bv {\boldsymbol{v}}
\def\bu {\boldsymbol{u}}
\def\bd {\boldsymbol{d}}
\def\bp {\boldsymbol{p}}
\def\R {\mathbb{R}}
\def\dom {\mathrm{dom~}}
\def\co {\mathrm{co~}}
\def\unitsim {\Lambda}
\def\Rn {\R^n}
\def\gmRn {\Gamma_0(\R^n)}
\def\HHJ {H}
\def\JHJ {J}
\def\LNN {L}
\def\JNN {\tilde{J}}
\def\SLO {S_{LO}}
\definecolor{newcolor}{rgb}{.8,.349,.1}
\journal{Journal of Computational Physics}
\begin{document}

\verso{J\'er\^ome Darbon, Tingwei Meng}

\begin{frontmatter}

\title{On some neural network architectures that can represent viscosity solutions of certain high dimensional Hamilton--Jacobi partial differential equations\tnoteref{tnote1}}%
\tnotetext[tnote1]{
Authors' names are given in last/family name alphabetical order.}

\author[1]{J\'er\^ome \snm{Darbon}}
\ead{jerome\_darbon@brown.edu}
\author[1]{Tingwei \snm{Meng}}
\ead{tingwei\_meng@brown.edu}

\address[1]{Department of Applied Mathematics, Brown University, Providence, RI, 02912, USA}

\received{}
\finalform{}
\accepted{}
\availableonline{}
\communicated{}

\begin{abstract}
    We propose novel connections between several neural network architectures and viscosity solutions of some Hamilton--Jacobi (HJ) partial differential equations (PDEs) whose Hamiltonian is convex and only depends on the spatial gradient of the solution. To be specific, we prove that under certain assumptions, the two neural network architectures we proposed represent viscosity solutions to two sets of HJ PDEs with zero error. 
    We also implement our proposed neural network architectures using Tensorflow and provide several examples and illustrations. Note that these neural network representations can avoid curve of dimensionality for certain HJ PDEs, since they do not involve neither grids nor discretization. 
    Our results suggest that efficient dedicated hardware implementation for neural networks can be leveraged to evaluate viscosity solutions of certain HJ PDEs.
\end{abstract}

\begin{keyword}
Hamilton--Jacobi partial differential equations\sep
Neural networks\sep
Lax-Oleinik representation formula\sep
Grid-free numerical methods
\end{keyword}

\end{frontmatter}

\section{Introduction}

Hamilton--Jacobi (HJ) partial differential equations (PDEs) arise in areas such as physics \cite{Arnold1989Math, Caratheodory1965CalculusI,Caratheodory1967CalculusII,Courant1989Methods,landau1978course},
optimal control \cite{Bardi1997Optimal, Elliott1987Viscosity,fleming1976deterministic,fleming2006controlled,mceneaney2006max}, game theory \cite{BARRON1984213, Buckdahn2011Recent, Evans1984Differential, Ishii1988Representation}, and imaging sciences \cite{darbon2015convex,darbon2019decomposition,Darbon2016Algorithms}. 
In this paper, we consider HJ PDEs with state and time independent  Hamiltonian function $H\colon\R^n\to \R$
and initial data 
$J\colon\R^n \to \R$ that read as follows
\begin{equation} \label{eqn:intro_example}
\begin{dcases} 
\frac{\partial S}{\partial t}(\bx,t)+H(\nabla_{\bx}S(\bx,t))= 0 & \mbox{{\rm in} }\mathbb{R}^{n}\times(0,+\infty),\\
S(\bx,0)=J(\bx) & \mbox{{\rm in} }\mathbb{R}^{n}.
\end{dcases}
\end{equation}
The partial derivative with respect to $t$ and the gradient vector with respect to $\bx$ of the solution $(\bx,t) \mapsto S(\bx,t)$  are denoted by 
 $\frac{\partial S}{\partial t}(\bx,t)$ and $\nabla_{\bx}S(\bx,t) =\left(\frac{\partial S}{\partial x_1} (\bx, t), \dots, \frac{\partial S}{\partial x_n} (\bx, t)\right)$, respectively.
 Note that the Hamiltonian $H$ only depends on $\nabla_{\bx} S(\bx,t)$.

Recently, \cite{darbon2019overcoming} establishes novel connections between some neural network architectures and  the viscosity solution of a set of HJ PDEs in the form of \eqref{eqn:intro_example}. (We refer readers to \cite{Bardi1997Optimal, bardi1984hopf, barles1994solutions, crandall1992user}
for the definition of the viscosity solution.) In \cite{darbon2019overcoming}, the authors provided the conditions under which their proposed neural network architecture represents the viscosity solution to the corresponding HJ PDEs whose initial data $J$ and Hamiltonian $H$ are related to the parameters in the neural network. Note that in the HJ PDEs they considered, the initial data $J$ is assumed to be a convex piecewise affine function, and the Hamiltonian $H$ also satisfies certain assumptions.

In this paper, we consider the HJ PDEs in the form of \eqref{eqn:intro_example} satisfying other assumptions. For instance, the Hamiltonian $H$ is convex, while the initial data $J$ is not necessarily convex. Under these assumptions, we prove that the two neural network architectures depicted in Figs.~\ref{fig: nn1} and \ref{fig: nn2} represent viscosity solutions to the corresponding HJ PDEs in the form of \eqref{eqn:intro_example} with initial data $J$ and convex Hamiltonian $H$. 
To be specific, in the first architecture shown in Fig.~\ref{fig: nn1}, the convex activation function $L$ in the neural network gives the Lagrangian function, whose Fenchel--Legendre transform gives the Hamiltonian $H$ in the corresponding HJ PDE. The initial data equals the minimum of several functions which are shifted copies of the asymptotic function $L'_\infty$ of $L$. The main result of this connection between the neural network architecture depicted in Fig.~\ref{fig: nn1} and the corresponding HJ PDE is stated in Thm.~\ref{thm:1}.
In the second architecture shown in Fig.~\ref{fig: nn2}, the activation function gives the initial data $J$ in the HJ PDE. The Hamiltonian $H$ is a piecewise affine convex function determined by the parameters in the neural network. The main result of this connection between the neural network architecture depicted in Fig.~\ref{fig: nn2} and the corresponding HJ PDE is stated in Thm.~\ref{thm:2}.

To summarize, this paper investigates the connection between several neural network architectures and some specific sets of HJ PDEs. The motivations and advantages of this work are listed as follows
\begin{itemize}
    \item Compared with traditional grid based representations, our proposed neural network representations do not involve any discretization of space and time. Hence these neural network representations can avoid the curse of dimensionality for certain HJ PDEs if the correct parameters are provided. 
    
    \item Our novel connections between certain HJ PDEs and neural networks suggest a possible direction to solve some HJ PDEs by leveraging efficient hardware technologies and silicon-based electric circuits dedicated to neural networks. 
    LeCun mentioned in \cite{lecun2019isscc} that the use of neural networks has been greatly influenced by available hardware. There have been many initiatives designing and constructing new hardware for extremely efficient (in terms of speed, latency, throughput or energy) implementations of neural networks.
    For instance, efficient neural network implementations are developed and optimized 
    using field programmable gate arrays \cite{farabet-suml-11,farabet-fpl-09,farabet.09.iccvw}, Intel's architecture \cite{banerjeeEtal2019sfi}, Google's ``Tensor Processor Unit" \cite{googleTPU17}, and certain building blocks \cite{kundu2019ktanh}. To obtain better performance on neural network computation, Xilinx announced a new set of hardware called Versal AI core, while Intel enhances their processors with specific hardware instructions.  
    In addition, there is an evolution of silicon-based electrical circuits for machine learning, for which we refer readers to \cite{chen2020classification, Hirjibehedin.20.nature}.
    LeCun also suggests in \cite[Sec.~3]{lecun2019isscc} possible new trends for hardware dedicated to neural networks. These trends for efficient neural network implementations motivate our study of the connections between neural network architectures and HJ PDEs.
    \item This work provides a possible interpretation of specific neural networks from the aspect of HJ PDEs.
\end{itemize}

\bigbreak
\noindent
\textbf{Literature review.} There is a huge body of literature on overcoming the curse of dimensionality of certain HJ PDEs.
These works include, but are not limited to, max-plus algebra methods \cite{mceneaney2006max,akian2006max,akian2008max, dower2015max,Fleming2000Max,gaubert2011curse,McEneaney2007COD,mceneaney2008curse,mceneaney2009convergence}, dynamic programming and reinforcement learning \cite{alla2019efficient,bertsekas2019reinforcement}, tensor decomposition techniques \cite{dolgov2019tensor,horowitz2014linear,todorov2009efficient}, sparse grids \cite{bokanowski2013adaptive,garcke2017suboptimal,kang2017mitigating}, model order reduction \cite{alla2017error,kunisch2004hjb}, polynomial approximation \cite{kalise2019robust,kalise2018polynomial}, optimization methods \cite{darbon2015convex,darbon2019decomposition,Darbon2016Algorithms,yegorov2017perspectives} and neural networks \cite{darbon2019overcoming,bachouch2018deep, Djeridane2006Neural,jiang2016using, Han2018Solving, hure2018deep, hure2019some, lambrianides2019new, Niarchos2006Neural, reisinger2019rectified,royo2016recursive, Sirignano2018DGM}. 

Recently, because of the trends for the efficient hardware implementations, neural networks have been increasingly applied in solving PDEs \cite{bachouch2018deep,Djeridane2006Neural,Han2018Solving, hure2018deep, hure2019some,lambrianides2019new,Niarchos2006Neural,reisinger2019rectified, royo2016recursive,Sirignano2018DGM,beck2018solving, beck2019deep,beck2019machine, Berg2018Unified,chan2019machine, Cheng2006Fixed, Dissanayake1994Neural,  dockhorn2019discussion, E2017Deep, Farimani2017Deep, Fujii2019Asymptotic, grohs2019deep, han2019solving, hsieh2018learning, jianyu2003numerical, khoo2017solving,khoo2019solving, Lagaris1998ANN, Lagaris2000NN,  lee1990neural, lye2019deep, McFall2009ANN, Meade1994Numerical, Milligen1995NN,  pham2019neural,  Rudd2014Constrained, Tang2017Study, Tassa2007Least, weinan2018deep, Yadav2015Intro, yang2018physics, yang2019adversarial,Zhao2017High,Kong2015Probabilistic} and inverse problems involving PDEs \cite{yang2018physics,long2017pde,long2019pde, meng2019composite, meng2019ppinn, pang2019fpinns, raissi2018deep,raissi2018forward,raissi2017physicsi,raissi2017physicsii,Raissi2019PINN, uchiyama1993solving, zhang2019learning, zhang2019quantifying,fan2019solving,yan2019adaptive}. 
Specifically, some high-dimensional HJ PDEs have been numerically solved using neural networks \cite{darbon2019overcoming,Han2018Solving,hure2019some,Sirignano2018DGM}.
In \cite{Sirignano2018DGM}, the solution to HJ PDEs is approximated by a deep neural network whose loss function is the $l^2$ error of the PDE, the initial condition and the boundary condition on randomly sampled points in the domain. In \cite{Han2018Solving}, a neural network architecture is proposed to approximate a backward stochastic differential equation which computes the solution to a second order HJ PDE via an associated stochastic representation formula. 
In \cite{hure2019some}, Hur{\'e} {\it et al.} approximate the solution and its gradient using two neural networks at each discretized time step. After the neural networks at a larger time $t_{j+1}$ are trained, the neural networks at $t_j$ are trained with loss function given by the error of the stochastic representation formula.
In \cite{darbon2019overcoming}, a neural network architecture is proposed for representing the viscosity solution to certain high dimensional HJ PDEs without error.
In addition, C{\'a}rdenas and Gibou \cite{cardenas2020deep} use neural networks to compute the mean curvature of the implicit level set function, which is the solution to a specific HJ PDE called level set equation.

\bigbreak
\noindent
\textbf{Organization of this paper.} 
This paper investigates the connections between two neural network architectures shown in Figs.~\ref{fig: nn1} and \ref{fig: nn2} and the viscosity solution of some HJ PDEs whose initial data and Hamiltonian satisfy specific assumptions. In Sec.~\ref{sec:background}, we introduce basic concepts in finite dimensional convex analysis which will be used later in this paper. In Sec.~\ref{sec:mainresults}, we present the main results. To be specific, we propose two neural network architectures. The first architecture is analyzed in Sec.~\ref{sec:firststructure}, while the second one is analyzed in Sec.~\ref{sec:secondstructure}. 
Thms.~\ref{thm:1} and \ref{thm:2} state that the neural network architectures shown in Figs.~\ref{fig: nn1} and \ref{fig: nn2} represent viscosity solutions to the HJ PDEs with convex Hamiltonian $\HHJ$ and initial data $\JHJ$ satisfying certain assumptions. 
We provide several examples and illustrations after each theorem. Finally, a conclusion is drawn in Sec.~\ref{sec:conclusion}.

\section{Background}
\label{sec:background}
In this section, we introduce related concepts in convex analysis that will be used in this paper. We refer readers to Hiriart\textendash Urruty and Lemar\'echal \citep{hiriart2013convexI,hiriart2013convexII} and Rockafellar \cite{rockafellar1970convex} for comprehensive references on finite-dimensional convex analysis. For the notation, we use $\R^n$ to denote the $n$-dimensional Euclidean space, on which the Euclidean scalar product is denoted by $\left\langle \cdot,\cdot\right\rangle$.

\begin{defn} (Convex sets and the unit simplex) A set $C\subset\mathbb{R}^{n}$ is called convex if for any $\lambda\in[0,1]$ and any $\boldsymbol{x},\boldsymbol{y}\in C$, the element $\lambda\boldsymbol{x} + (1-\lambda)\boldsymbol{y}$ is in $C$. The unit simplex is a specific convex set in $\R^n$, denoted by $\unitsim_n$, defined by
\begin{equation}\label{eqt:def_unitsimplex}
    \unitsim_n \coloneqq \left\{(\alpha_1, \dots, \alpha_n)\in [0,1]^n:\ \sum_{i=1}^n \alpha_i = 1\right\}.
\end{equation}
\end{defn} 

\begin{defn}
\label{def:domains_prop}(Domains and proper functions) The domain of a function $f\colon\mathbb{R}^{n}\to\mathbb{R}\cup\{+\infty\}$ is the set
\[
\dom f=\left\{ \boldsymbol{x}\in\mathbb{R}^{n}:f(\boldsymbol{x})<+\infty\right\} .
\]
A function $f\colon \R^n\to\R\cup\{+\infty\}$ is called proper if its domain is non-empty. 
\end{defn}

\begin{defn}
\label{def:convex}(Convex functions, concave functions and lower semicontinuity)
A proper function $f\colon\mathbb{R}^{n}\to\mathbb{R}\cup\{+\infty\}$ is called convex if the set $\dom f$ is convex and if for any $\boldsymbol{x},\boldsymbol{y}\in\dom f$ and all $\lambda\in[0,1]$, there holds
\begin{equation*}
f(\lambda\boldsymbol{x}+(1-\lambda)\boldsymbol{y})\leqslant\lambda f(\boldsymbol{x})+(1-\lambda)f(\boldsymbol{y}).\label{eq:convex_def}
\end{equation*}
A function $f\colon\mathbb{R}^{n}\to\mathbb{R}\cup\{-\infty\}$ is called concave if $-f$ is a convex function.
A proper function $f\colon\mathbb{R}^{n}\to\mathbb{R}\cup\{+\infty\}$ is called lower semicontinuous if for every sequence $\left\{ \boldsymbol{x}_{k}\right\} _{k=1}^{+\infty}$ in $\mathbb{R}^{n}$ with $\lim_{k\to+\infty}\boldsymbol{x}_{k}=\boldsymbol{x}\in\mathbb{R}^{n}$, we have $\liminf_{k\to+\infty}f(\boldsymbol{x}_{k})\geqslant f(\boldsymbol{x})$.
The class of proper, lower semicontinuous convex functions is denoted by $\Gamma_{0}(\mathbb{R}^{n})$.
\end{defn}

\begin{defn}
\label{def:legendre_t}(Fenchel--Legendre transform) Let $f\in\Gamma_{0}(\mathbb{R}^{n})$.
The Fenchel--Legendre transform $f^{*}\colon\mathbb{R}^{n}\to\mathbb{R}\cup\{+\infty\}$ of $f$ is defined as
\begin{equation*}
f^{*}(\boldsymbol{p})=\sup_{\boldsymbol{x}\in\mathbb{R}^{n}}\left\{ \left\langle \boldsymbol{p},\boldsymbol{x}\right\rangle -f(\boldsymbol{x})\right\} .\label{eq:fenchel_t_def}
\end{equation*}
For any $f\in\Gamma_{0}(\mathbb{R}^{n})$, the mapping $f\mapsto f^{*}$ is one-to-one. Moreover, there hold $f^{*}\in\Gamma_{0}(\mathbb{R}^{n})$ and $(f^{*})^{*}=f$. 
\end{defn}

\begin{defn} \label{def:infconv}(Inf-convolution)
Let $f,g\colon \R^n\to \R\cup\{+\infty\}$ be two proper convex functions satisfying 
\begin{equation}\label{eqt:infconv_assump}
    f(\bx) \geq \langle \bp, \bx\rangle + a  \quad\text{ and } \quad 
    g(\bx) \geq \langle \bp, \bx\rangle + a \text{ for every } \bx\in \R^n, 
\end{equation}
for some $\bp\in \R^n$ and $a\in \R$. The inf-convolution of $f$ and $g$, denoted by $f\square g$, is defined by 
\begin{equation*}
    f\square g(\bx) = \inf_{\bu\in\R^n} \{f(\bu) + g(\bx-\bu)\}.
\end{equation*}
Moreover, the function $f\square g \colon \R^n\to\R\cup\{+\infty\}$ is a proper and convex function \cite[Prop.~IV.2.3.2]{hiriart2013convexI}.
\end{defn}

\begin{defn}
\label{def:asymptoticFunction}
(Asymptotic function)
Let $f$ be a function in $\gmRn$ and $\bx_0$ be an arbitrary point in $\dom f$. The asymptotic function of $f$, denoted by $f'_\infty$, is defined by 
\begin{equation}\label{eqt:defasym}
    f'_\infty(\bd) = \sup_{s>0} \frac{f(\bx_0 + s\bd) - f(\bx_0)}{s} = \lim_{s\to+\infty} \frac{f(\bx_0 + s\bd) - f(\bx_0)}{s},
\end{equation}
for every $\bd\in \R^n$. In fact, this definition does not depend on the point $\bx_0$.
Moreover, the asymptotic function $f'_\infty$ is convex and positive $1-$homogeneous, i.e., $f'_\infty(\alpha\bd) = \alpha f'_\infty(\bd)$ for every $\alpha >0$ and $\bd\in\R^n$. For details, see \cite[Chap.~IV.3.2]{hiriart2013convexI}
\end{defn}

We summarize some notations and definitions in Tab. \ref{tab:notations}.
\newcolumntype{s}{>{\hsize=.7\hsize}X}
\begin{table}[!h]
\centering
 \caption{Notations used in this paper. Here, we use $f,g$ to denote functions from $\R^n$ to $\R\cup\{+\infty\}$ and $\bx,\by,\bp,\bd$ to denote vectors in $\Rn$. For simplicity, we omit the assumptions in the definitions.}
 \label{tab:notations}
\begin{tabularx}{\textwidth}{ l|s|X }
\hline
\noalign{\smallskip}
 Notation & Meaning & Definition\\
 \noalign{\smallskip}
 \hline
 \noalign{\smallskip}

 $\left\langle \cdot,\cdot\right\rangle $ & 
 Euclidean scalar product in $\mathbb{R}^{n}$ & $\langle \bx, \by\rangle \coloneqq \sum_{i=1}^n x_iy_i$
 \\
 $\unitsim_n$ & The unit simplex in $\R^n$ & $\left\{(\alpha_1, \dots, \alpha_n)\in [0,1]^n:\ \sum_{i=1}^n \alpha_i = 1\right\}$
 \\
 $\dom f$ & The domain of $f$ & $\{\bx\in \Rn:\ f(\bx) < +\infty\}$
 \\
 $\gmRn$ & A useful and standard class of convex functions & The set containing all proper, convex, lower semicontinuous functions from $\Rn$ to $\R\cup\{+\infty\}$
 \\
 $f^*$ & Fenchel--Legendre transform of $f$ & $f^*(\bp) \coloneqq \sup_{\bx\in \Rn} \{\langle \bp, \bx\rangle - f(\bx)\}$
 \\
 $f\square g$ & Inf-convolution of $f$ and $g$ &
 $f\square g(\bx) = \inf_{\bu\in\R^n} \{f(\bu) + g(\bx-\bu)\}$
 \\
 $f'_\infty$ & The asymptotic function of $f$ &
 $f'_\infty(\bd) = \sup_{s>0} \left\{\frac{1}{s}(f(\bx_0 + s\bd) - f(\bx_0))\right\}$
 \\
 \noalign{\smallskip}
\hline
 \end{tabularx}
\end{table}

\section{Main Results} \label{sec:mainresults}
In this paper, we consider the HJ PDE given by
\be \label{eqt:HJ}
\begin{dcases} 
\frac{\partial S}{\partial t}(\bx,t)+\HHJ(\nabla_{\bx}S(\bx,t))= 0 & \mbox{{\rm in} }\mathbb{R}^{n}\times(0,+\infty),\\
S(\bx,0)=\JHJ(\bx) & \mbox{{\rm in} }\mathbb{R}^{n},
\end{dcases}
\ee
where $H\colon \R^n\to \R\cup\{+\infty\}$ is called Hamiltonian, and $J\colon \R^n\to \R$ is the initial data. 
It is well-known that when $H$ is convex, the viscosity solution is given by the Lax-Oleinik formula \cite{bardi1984hopf, evans1998partial, Hopf1965} stated as follows
\be\label{eqt:LOformula}
\SLO(\bx,t) = \inf_{\bu\in \Rn} \left\{\JHJ(\bu) + t\HHJ^*\left(\frac{\bx - \bu}{t}\right)\right\} = \inf_{\bv\in\Rn} \left\{\JHJ(\bx-t\bv) + t\HHJ^*(\bv)\right\},
\ee
where $H^*$ is the Fenchel--Legendre transform of $H$.

In this part, we represent the Lax-Oleinik formula using two neural network architectures. The first one is given by
\begin{equation}\label{eqt:nn1}
    f_1(\bx, t) = \min_{i\in\{1,\dots,m\}} \left\{t\LNN\left(\frac{\bx-\bu_i}{t}\right) + a_i \right\}.
\end{equation}
In this function, $\{(\bu_i, a_i)\}_{i=1}^m\subset \Rn\times \R$ is the set of parameters, and the function $\LNN\colon \R^n\to \R$ is the activation function, which corresponds to the Lagrangian function in the Hamilton--Jacobi theory. An illustration is shown in Fig.~\ref{fig: nn1}.

\begin{figure}[ht]
\includegraphics[width=\textwidth]{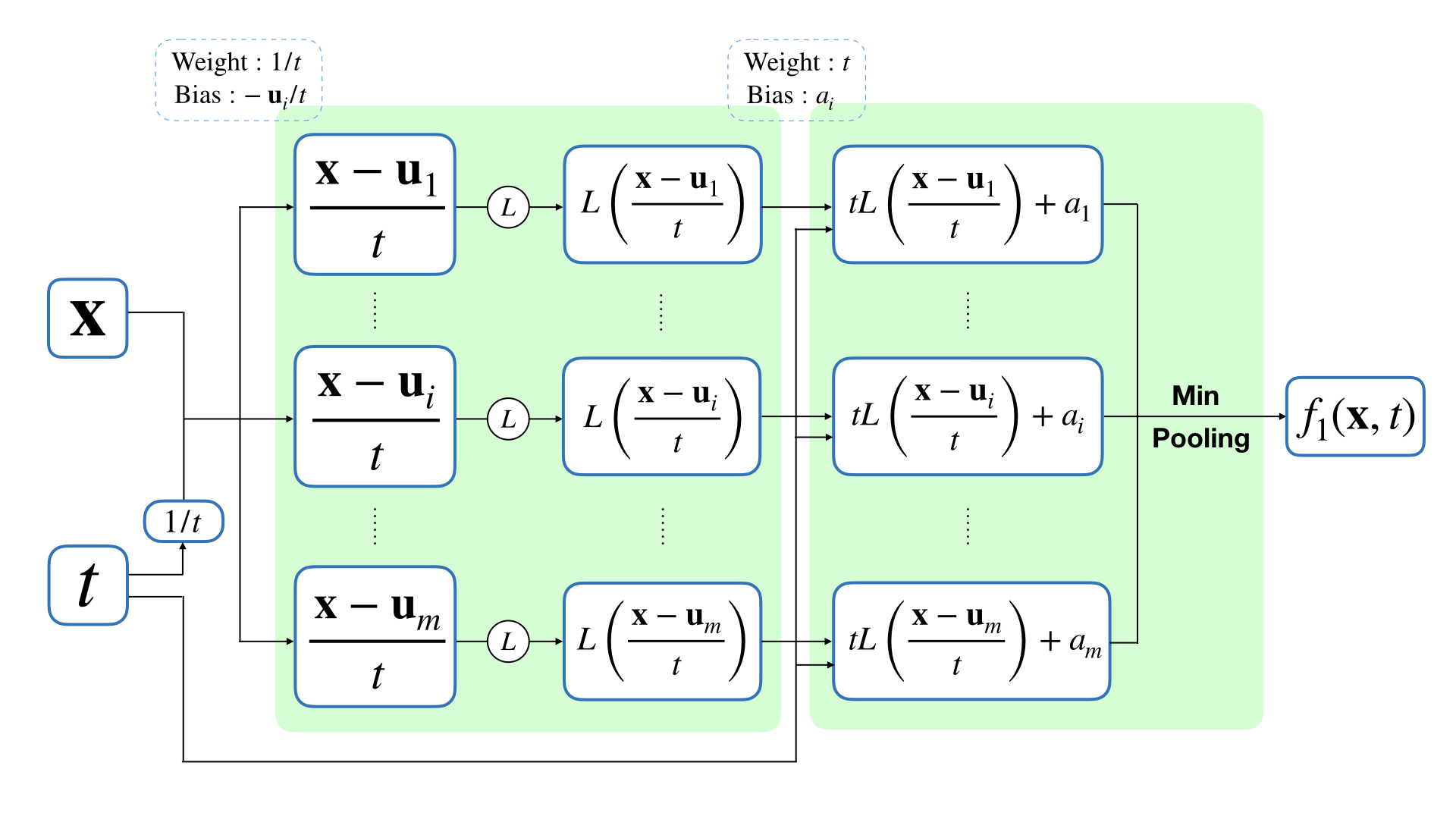}
\caption{An illustration of the architecture of the neural network (\ref{eqt:nn1}) that represents the Lax-Oleinik formula with specific initial condition $\JHJ = f_1(\cdot, 0)$ defined in \eqref{eqt:f1_t0} and the convex Hamiltonian $\HHJ = \LNN^*$. \label{fig: nn1}}
\end{figure}

The second neural network architecture is defined by
\begin{equation}\label{eqt:nn2}
    f_2(\bx,t) = \min_{i\in\{1,\dots,m\}} \left\{\JNN(\bx - t\bv_i) + tb_i\right\}.
\end{equation}
Here, $\{(\bv_i, b_i)\}_{i=1}^m\subset \Rn\times \R$ is the set of parameters, and $\JNN\colon \Rn\to \R$ is the activation function, which corresponds to the initial function in the HJ PDE. An illustration is shown in Fig.~\ref{fig: nn2}.

These two neural network architectures are further introduced and investigated in Section~\ref{sec:firststructure} and~\ref{sec:secondstructure}, respectively. To be specific, they are shown to represent a viscosity solution to certain HJ PDEs under some assumptions without errors. In addition, several examples are shown in each subsection. In these example, certain HJ PDEs are solved using corresponding neural network architectures. The Tensorflow codes of these two neural networks using our proposed architectures are provided in the website \url{https://github.com/TingweiMeng/NN_LO}.

\begin{figure}[ht]
\includegraphics[width=\textwidth]{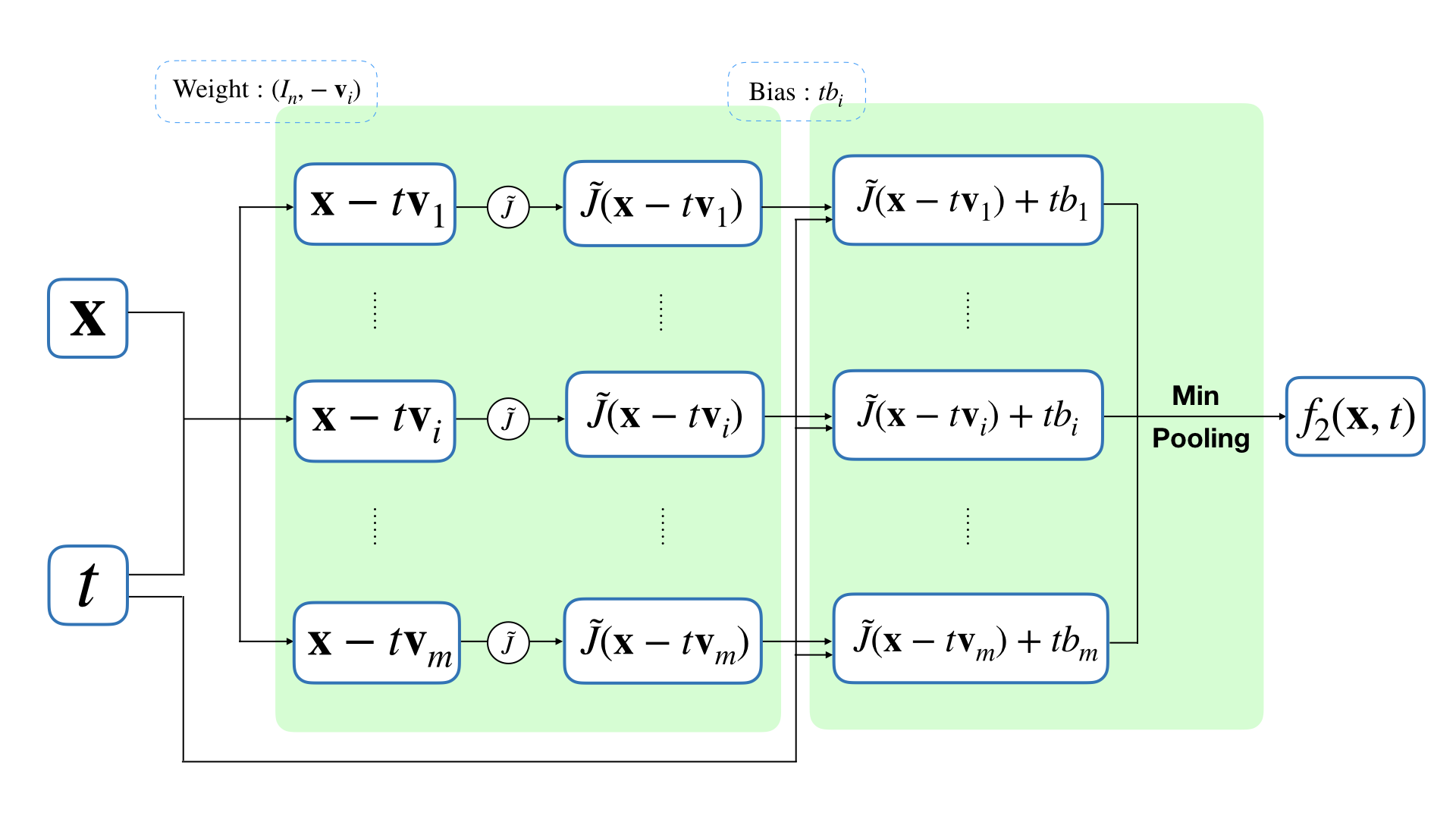}
\caption{An illustration of the architecture of the neural network (\ref{eqt:nn2}) that represents the Lax-Oleinik formula with specific initial condition $\JHJ = \JNN$ and the convex Hamiltonian $\HHJ$ defined in \eqref{eqt:thm2_H_def}. \label{fig: nn2}}
\end{figure}

\subsection{The first architecture}  \label{sec:firststructure}
In this subsection, we analyze the first neural network architecture given by Eq.~\eqref{eqt:nn1}. 
Before introducing the main theorem \ref{thm:1} in this subsection, we prove the following lemma which will be used in the proof of Thm.~\ref{thm:1}.
\begin{lem}\label{lem:conv_asymp}
Let $f$ be a function in $\gmRn$ and $f'_\infty$ be the asymptotic function of $f$.
Then, we have $f\square f'_\infty = f$.
\end{lem}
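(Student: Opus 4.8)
The plan is to establish the identity by proving the two pointwise inequalities $f \square f'_\infty \leq f$ and $f \square f'_\infty \geq f$, working directly from the definition~\eqref{eqt:defasym} of the asymptotic function rather than routing through conjugates. Before the estimates I would record that the inf-convolution is admissible in the sense of Definition~\ref{def:infconv}: since $f \in \gmRn$ it has an affine minorant $\langle \bp, \cdot\rangle + a$ with $\bp \in \dom f^*$, and substituting this minorant into~\eqref{eqt:defasym} and letting $s \to +\infty$ shows that $f'_\infty(\bd) \geq \langle \bp, \bd\rangle$ for every $\bd$. Hence, after replacing $a$ by $\min\{a,0\}$ if necessary, both $f$ and $f'_\infty$ are minorized by a common affine function, so hypothesis~\eqref{eqt:infconv_assump} holds and $f \square f'_\infty$ is proper and convex.

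For the inequality $f \square f'_\infty \leq f$, I would take $\bu = \bx$ in the infimum defining the inf-convolution, which yields $f \square f'_\infty(\bx) \leq f(\bx) + f'_\infty(\boldsymbol{0})$. It then suffices to note that $f'_\infty(\boldsymbol{0}) = 0$, which is immediate from~\eqref{eqt:defasym}: for any base point $\bx_0 \in \dom f$ every difference quotient $\tfrac{1}{s}\left(f(\bx_0) - f(\bx_0)\right)$ vanishes.

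For the reverse inequality I must show $f(\bu) + f'_\infty(\bx - \bu) \geq f(\bx)$ for every $\bu \in \Rn$. When $\bu \notin \dom f$ the left-hand side is $+\infty$ and nothing is to prove, so I may assume $\bu \in \dom f$. The key observation is that, since~\eqref{eqt:defasym} is independent of the base point, I can evaluate it at $\bx_0 = \bu$ with direction $\bd = \bx - \bu$; keeping only the term $s = 1$ in the supremum over $s > 0$ gives $f'_\infty(\bx - \bu) \geq f(\bu + (\bx - \bu)) - f(\bu) = f(\bx) - f(\bu)$, which rearranges to exactly the desired bound. Taking the infimum over $\bu$ then yields $f \square f'_\infty(\bx) \geq f(\bx)$, and combining the two inequalities proves the lemma.

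I expect no serious obstacle: the argument rests entirely on the two elementary facts that $f'_\infty(\boldsymbol{0}) = 0$ and that the single slice $s = 1$ of the defining supremum already minorizes $f'_\infty(\bx - \bu)$ by $f(\bx) - f(\bu)$. The only items demanding care are the bookkeeping for $\bu \notin \dom f$ and the verification that the inf-convolution is admissible. A more abstract route via the conjugate identity $(f \square f'_\infty)^* = f^* + (f'_\infty)^*$ together with $(f'_\infty)^* = \iota_{\overline{\dom f^*}}$ would also succeed, but it recovers $f$ only up to lower-semicontinuous closure and is heavier than the direct estimate above.
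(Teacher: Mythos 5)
Your proof is correct and follows essentially the same route as the paper's: both directions rest on $f'_\infty(\boldsymbol{0})=0$ (taking $\bu=\bx$) and on extracting the $s=1$ slice of the supremum in \eqref{eqt:defasym} at base point $\bx_0=\bu$ to get $f'_\infty(\bx-\bu)\geq f(\bx)-f(\bu)$, exactly as in the paper. Your additional verification that $f$ and $f'_\infty$ share a common affine minorant, so that the inf-convolution is admissible in the sense of Definition~\ref{def:infconv}, is a small point of care the paper leaves implicit, but it does not change the argument.
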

\begin{proof}
First we consider the case when $\bx\in\dom f$. By definition \ref{def:infconv} we have
\benn
(f\square f'_\infty) (\bx) = \inf_{\bu\in\Rn} \left\{f(\bu) + f'_\infty(\bx - \bu)\right\} \leq f(\bx) + f'_\infty(\mathbf{0}) = f(\bx),
\eenn
where the last equality holds because $f'_\infty(\mathbf{0}) = 0$ by definition \ref{def:asymptoticFunction}.
On  the other hand, taking $s=1$, $\bd = \bx - \bu$ and $\bx_0 = \bu$ in the second term in Eq.~\eqref{eqt:defasym} in definition \ref{def:asymptoticFunction}, we obtain
\be \label{eqt:1lem1}
f'_\infty(\bx-\bu) \geq f(\bu + \bx - \bu) - f(\bu) = f(\bx) - f(\bu),
\ee
for every $\bu \in \dom f$. As a result, we have
\benn
(f\square f'_\infty) (\bx) = \inf_{\bu\in\dom f} \left\{f(\bu) + f'_\infty(\bx - \bu)\right\}
\geq \inf_{\bu\in\dom f} \left\{f(\bu) + f(\bx) - f(\bu)\right\} = f(\bx).
\eenn
Therefore, we conclude that $(f\square f'_\infty) (\bx) = f(\bx)$ for every $\bx\in \dom f$.

Now we consider the case when $\bx \notin \dom f$ and prove $(f\square f'_\infty) (\bx) = +\infty$. It suffices to prove $f'_\infty(\bx - \bu) = +\infty$ for all $\bu \in \dom f$. Since $\bu\in \dom f$, Eq.~\eqref{eqt:1lem1} still holds. As a result, we have 
\benn
f'_\infty(\bx-\bu) \geq f(\bx) - f(\bu) = +\infty,
\eenn
since $\bx\notin\dom f$ and $\bu\in \dom f$. Therefore, we conclude that $(f\square f'_\infty) (\bx) = +\infty = f(\bx)$ for every $\bx \notin \dom f$.
\end{proof}

Now, we define the initial data $f_1(\cdot,0): \Rn \to \R$ as follows 
\begin{equation}\label{eqt:f1_t0}
    f_1(\bx,0) = \min_{i\in\{1,\dots, m\}} \left\{\LNN'_\infty(\bx-\bu_i) + a_i\right\},
\end{equation}
where $L'_\infty$ is the asymptotic function of $L$.
Then, we present the main theorem stating that the function $f_1$ solves the HJ PDE \eqref{eqt:HJ} with the initial condition given by $J = f_1(\cdot, 0)$ defined in \eqref{eqt:f1_t0} and the convex Hamiltonian $H$ which is the Fenchel--Legendre transform of $L$.

\begin{thm}\label{thm:1}
Let $L\colon \Rn \to \R$ be a convex uniformly Lipschitz function. Let $f_1$ be the function defined in \eqref{eqt:nn1}. Then $f_1 = \SLO$, where $\SLO$ is the Lax--Oleinik formula in \eqref{eqt:LOformula} with the initial condition $\JHJ = f_1(\cdot, 0)$ defined in \eqref{eqt:f1_t0} and the convex Hamiltonian defined by $\HHJ = \LNN^*$. Therefore, $f_1$ is a viscosity solution to the corresponding HJ PDE \eqref{eqt:HJ}.
\end{thm}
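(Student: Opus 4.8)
The plan is to reduce the claimed identity $f_1=\SLO$ to a single application of Lemma~\ref{lem:conv_asymp}, carried out copy-by-copy over the index $i$. First I would record that a convex function $\LNN\colon\Rn\to\R$ that is uniformly Lipschitz is continuous, hence lower semicontinuous and finite-valued, so $\LNN\in\gmRn$; consequently $\HHJ=\LNN^*\in\gmRn$ and $\HHJ^*=(\LNN^*)^*=\LNN$ by Definition~\ref{def:legendre_t}. Substituting $\HHJ^*=\LNN$ into the Lax--Oleinik formula \eqref{eqt:LOformula} and inserting the initial datum $\JHJ=f_1(\cdot,0)$ from \eqref{eqt:f1_t0} turns $\SLO(\bx,t)$ into a double infimum: an inner minimum over $i$ coming from $\JHJ$ and an outer infimum over $\bu$. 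Since the minimum over the finite index set commutes with the infimum over $\bu$, I would exchange them to obtain
\[
\SLO(\bx,t)=\min_{i\in\{1,\dots,m\}}\left\{a_i+\inf_{\bu\in\Rn}\left[\LNN'_\infty(\bu-\bu_i)+t\LNN\!\left(\frac{\bx-\bu}{t}\right)\right]\right\}.
\]

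The heart of the argument is to show that, for each fixed $i$, the inner infimum equals $t\LNN\bigl((\bx-\bu_i)/t\bigr)$; granting this, the displayed expression collapses exactly to $f_1(\bx,t)$ as defined in \eqref{eqt:nn1}, which finishes the identity. To evaluate the inner infimum I would introduce the rescaled Lagrangian $g_t(\boldsymbol{z})\coloneqq t\LNN(\boldsymbol{z}/t)$, which for $t>0$ again lies in $\gmRn$, and translate by $\bu_i$ via $\boldsymbol{w}=\bu-\bu_i$, so that with $\by=\bx-\bu_i$ the infimum becomes $\inf_{\boldsymbol{w}}\{\LNN'_\infty(\boldsymbol{w})+g_t(\by-\boldsymbol{w})\}=(\LNN'_\infty\square\, g_t)(\by)$. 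The key computation is that the asymptotic function is invariant under this rescaling, i.e. $(g_t)'_\infty=\LNN'_\infty$: plugging $g_t$ into \eqref{eqt:defasym} and absorbing the factor $t$ into the limit variable shows the two factors of $t$ cancel. Then Lemma~\ref{lem:conv_asymp} applied to $g_t$, together with the symmetry of the inf-convolution, yields $\LNN'_\infty\square\, g_t=(g_t)'_\infty\square\, g_t=g_t$, so the inner infimum is $g_t(\by)=t\LNN((\bx-\bu_i)/t)$, as required.

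Two routine checks would accompany this. First, Lemma~\ref{lem:conv_asymp} and Definition~\ref{def:infconv} require the two summands to admit a common affine minorant as in \eqref{eqt:infconv_assump}; this holds because the uniform Lipschitz constant of $\LNN$ makes $\LNN'_\infty$ finite and Lipschitz, hence minorized by any linear form $\langle\bp_0,\cdot\rangle$ with $\bp_0\in\partial\LNN$, while any subgradient of $\LNN$ provides an affine minorant of $g_t$ with the same linear part. Second, the exchange of $\min_i$ and $\inf_{\bu}$ is legitimate precisely because the index set is finite. Finally, since $\HHJ=\LNN^*$ is convex, the Lax--Oleinik function $\SLO$ is the viscosity solution of \eqref{eqt:HJ} with initial data $\JHJ=f_1(\cdot,0)$ by the classical theory cited around \eqref{eqt:LOformula}, so the established equality $f_1=\SLO$ shows $f_1$ is that viscosity solution.

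I expect the main obstacle to be the rescaling identity $(g_t)'_\infty=\LNN'_\infty$ together with correctly arranging the translation and the symmetry so that Lemma~\ref{lem:conv_asymp} applies to $g_t$ rather than to $\LNN$ itself. Once the invariance of the asymptotic function under $\boldsymbol{z}\mapsto t\LNN(\boldsymbol{z}/t)$ is in hand, the remaining steps are bookkeeping.
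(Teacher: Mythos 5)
Your proposal is correct and follows essentially the same route as the paper's proof: substitute $\JHJ=f_1(\cdot,0)$ into the Lax--Oleinik formula, swap the finite $\min_i$ with the infimum over $\bu$, show the rescaled Lagrangian $g_t(\boldsymbol{z})=t\LNN(\boldsymbol{z}/t)$ satisfies $(g_t)'_\infty=\LNN'_\infty$, and collapse the inner infimum via Lemma~\ref{lem:conv_asymp} applied to $g_t$. Your explicit verification of the common affine minorant in \eqref{eqt:infconv_assump} is a small welcome addition the paper leaves implicit; just note that for the final viscosity-solution claim the paper invokes the \emph{proof} of \cite[Sec.~10.3.4, Thm.~3]{evans1998partial} rather than the theorem itself, since here $\dom\HHJ$ is bounded and the classical hypotheses do not apply verbatim.
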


\begin{rem}
In the theorem above, we assume $L$ to be a convex uniform Lipschitz function, which implies that its Fenchel--Legendre transform $H$ has bounded domain, and hence $H$ may take the value $+\infty$ somewhere. As a result, the uniqueness theorem of the viscosity solution in \cite[Chap.~10.2]{evans1998partial} does not hold. To our knowledge, we are not aware of any uniqueness result of the viscosity solution to the HJ PDEs where $\dom H$ is bounded.
\end{rem}

\begin{proof}
Since $L$ is Lipschitz continuous, by \cite[Prop.~IV.3.2.7]{hiriart2013convexI} $L'_\infty$ is finite valued, which implies that $\Rn \ni \bx \mapsto f_1(\bx, 0)$ is finite valued  and  it is a valid initial condition.

Let $\bx\in\Rn$ and $t>0$. By definition \ref{def:infconv} and \eqref{eqt:f1_t0}, we have
\be \label{eqt:pfthm1_SLO}
\begin{split}
\SLO(\bx,t) &= \inf_{\bu\in\Rn} \left\{\JHJ(\bu) + tH^*\left(\frac{\bx-\bu}{t}\right)\right\}
= \inf_{\bu\in\Rn} \left\{\min_{i\in\{1,\dots,m\}} \left\{L'_\infty(\bu-\bu_i) + a_i\right\} + tH^*\left(\frac{\bx-\bu}{t}\right)\right\}\\
&= \min_{i\in\{1,\dots,m\}} \left\{a_i + \inf_{\bu\in\Rn}\left\{L'_\infty(\bu - \bu_i) + tH^*\left(\frac{\bx-\bu}{t}\right)\right\}\right\}\\
&= \min_{i\in\{1,\dots,m\}} \left\{a_i + \left(L'_\infty \square tH^*\left(\frac{\cdot}{t}\right)\right)(\bx - \bu_i)\right\}.
\end{split}
\ee
Since $L$ is convex with $\dom L = \R^n$, then the function $L$ is continuous \cite[Thm.~IV.3.1.2]{hiriart2013convexI}. As a result, $L$ is a function in $\gmRn$, hence we have $L = (L^*)^*$, which equals $H^*$ because we assume $H = L^*$.
Let $t>0$ and $h: \R^n\to \R$ be defined by $h(\bx) = tH^*\left(\frac{\bx}{t}\right) = tL\left(\frac{\bx}{t}\right)$ for every $\bx\in\R^n$. Let $\bx_0$ be an arbitrary point in $\dom h$, which implies $\frac{\bx_0}{t}\in\dom L$. By definition \ref{def:asymptoticFunction}, the asymptotic function of $h$ evaluated at $\bd$ is given by 
\benn
\begin{split}
h'_\infty(\bd) = &\sup_{s>0} \left\{\frac{1}{s} \left(tL\left(\frac{\bx_0 + s\bd}{t}\right) - tL\left(\frac{\bx_0}{t}\right)\right)\right\}
= \sup_{s>0} \left\{\frac{t}{s} \left(L\left(\frac{\bx_0}{t} + \frac{s}{t}\bd\right) - L\left(\frac{\bx_0}{t}\right)\right)\right\}\\
= &\sup_{\tau > 0} \left\{\frac{1}{\tau} \left(L\left(\frac{\bx_0}{t} + \tau\bd\right) - L\left(\frac{\bx_0}{t}\right)\right)\right\}
= L'_\infty(\bd),
\end{split}
\eenn
where in the third equality we set $\tau = \frac{s}{t}$. Hence, using the equality above, the definition of $h$ and by invoking Lem.~\ref{lem:conv_asymp}, we obtain
\benn
\left(L'_\infty \square tH^*\left(\frac{\cdot}{t}\right)\right)(\bx - \bu_i)
= \left(h'_\infty \square h\right)(\bx - \bu_i)
= h(\bx - \bu_i) = tL\left(\frac{\bx - \bu_i}{t}\right).
\eenn
We combine the equality above with \eqref{eqt:pfthm1_SLO}, to obtain
\benn
\SLO(\bx,t) = \min_{i\in\{1,\dots,m\}} \left\{a_i + \left(L'_\infty \square tH^*\left(\frac{\cdot}{t}\right)\right)(\bx - \bu_i)\right\}
= \min_{i\in\{1,\dots,m\}} \left\{a_i + tL\left(\frac{\bx - \bu_i}{t}\right)\right\} = f_1(\bx,t).
\eenn
Therefore, we conclude that $\SLO(\bx,t) = f_1(\bx,t)$ for each $\bx\in\Rn$ and $t>0$. Then, using the same proof as in \cite[Sec.~10.3.4, Thm.~3]{evans1998partial}, we conclude that $f_1$ is a viscosity solution to the corresponding HJ PDE \eqref{eqt:HJ}.
\end{proof}

\begin{rem}
Although the initial conditions for the HJ PDE considered in Theorem~\ref{thm:1} are given by a representation formula~\eqref{eqt:f1_t0}, it is not as restricted as it may seem to be.
Indeed, the functions in the form of~\eqref{eqt:f1_t0} can approximate a meaningful initial condition when $m$ approaches infinity. We will illustrate this point using an example. Consider the Lagrangian function $L\colon\Rn\to\R$ satisfying $L'_\infty = \|\cdot\|$ (for instance when $L = \|\cdot\|$). Then the domain of the Hamiltonian $H$ is the unit ball in $\R^n$, denoted by $B_1(\R^n)$. For this Hamiltonian, the reasonable set of initial data $J$ is the set of $1-$Lipschitz functions. From the physics point of view, the initial momentum $\bp_0$ (given by the gradient of $J$ at the initial position $\bx_0$) needs to be in $\dom H = B_1(\R^n)$, in order to have a finite energy $H(\bp_0)$. Therefore, the initial data $J$ needs to be $1-$Lipschitz. Now we argue that any $1-$Lipschitz function can be approximated using functions in the form of~\eqref{eqt:f1_t0} when $m$ increases to infinity. As a result, any reasonable initial condition can be approximated using~\eqref{eqt:f1_t0}.
Let $g\colon \Rn\to\R$ be an arbitrary $1-$Lipschitz function. Let $\{\bu_i\}$ be a dense sequence in $\R^n$ and let $a_i := g(\bu_i)$ for each $i\in\{1,2,\dots\}$. Define $g_m\colon \R^n\to\R$ by the formula in~\eqref{eqt:f1_t0} with the chosen parameters $\{\bu_i,a_i\}_{i=1}^m$, i.e., define $g_m$ by
\begin{equation*}
    g_m(\bx):= \min_{i\in\{1,\dots,m\}} \{\|\bx-\bu_i\| + a_i\} = \min_{i\in\{1,\dots,m\}} \{\|\bx-\bu_i\| + g(\bu_i)\}.
\end{equation*}
It is straightforward to check that $g_m(\bu_i) = g(\bu_i)$ for each $i\in\{1,\dots, m\}$, and $g_m(\bx)\geq g(\bx)$ for each $\bx\in\R^n$, by using assumption that $g$ is $1-$Lipschitz. Therefore, $\{g_m\}$ is a decreasing sequence which is bounded below by $g$.
Then, it is straightforward to check that $g_m$ converges to $g$ pointwisely as $m$ going to infinity, by using the assumption that $\{\bu_i\}$ is dense in $\R^n$ and $\{g_m\}, g$ are $1-$Lipschitz. Moreover, this convergence can be improved to $\Gamma-$convergence, since we have the monotonicity $g_1 \geq g_2\geq \cdots \geq g$. Therefore, in this example, the set of the functions in the form of~\eqref{eqt:f1_t0} is actually dense (in the sense of pointwise convergence and $\Gamma-$convergence) in the set of $1-$Lipschitz functions, which is a reasonable set for the initial conditions to the HJ PDEs with this Lagrangian $L$, as we claimed above.
\end{rem}

\begin{example} \label{eg:f1_1}
Let us consider the following one dimensional example that  illustrates the function $f_1\colon \R\times [0,+\infty)\to \R$ with three neurons, i.e., we set $n=1$ and $m=3$. The Lagrangian $L$ is defined as follows
\begin{equation*}
    L(x) = \begin{dcases}
    -x - \frac{1}{2} & x < -1,\\
    \frac{x^2}{2} &-1\leq x\leq 2,\\
    2x - 2 & x >2,
    \end{dcases}
\end{equation*}
for each $x\in\R$. Then, by Thm.~\ref{thm:1}, the Hamiltonian $H$ is given by
\begin{equation*}
    H(p) = L^*(p) = 
    \begin{dcases}
    \frac{p^2}{2} & -1\leq p\leq 2,\\
    +\infty &\text{otherwise.}
    \end{dcases}
\end{equation*}
Also, by Thm.~\ref{thm:1}, the initial data $\JHJ$ is given by $f_1(\cdot, 0)$ defined in \eqref{eqt:f1_t0}. In other words, $\JHJ$ is the minimum of three functions, each of which is a shift of the function $L'_\infty$, which by definition \ref{def:asymptoticFunction} reads as follows
\begin{equation*}
    L'_\infty(x) = \begin{dcases}
    -x &x<0,\\
    2x &x\geq 0.
    \end{dcases}
\end{equation*}
In this example, we choose the parameters $(u_1, a_1) = (-2, -0.5)$, $(u_2, a_2) = (0, 0)$ and $(u_3, a_3) = (2,-1)$. The corresponding functions $\JHJ$, $\HHJ$ and $f_1$ are shown in Fig.~\ref{fig: egf1}, where (a) shows the initial value $\JHJ$, (b) shows the convex Hamiltonian $\HHJ$, and (c) and (d) show the solution $S = f_1$ evaluated at $t = 1$ and $t = 3$, respectively. Note that our proposed architecture computes the viscosity solution without numerical errors. The viscosity solution in this example is not a classical solution, and there exist points where the solution is not differentiable. 
In Fig.~\ref{fig: egf1}~(c) and~(d), we can observe kinks in the graph of the functions given by our proposed neural network architecture. It can be seen from the non-smoothness of the graphs in Fig.~\ref{fig: egf1}~(c) and~(d) that our proposed architecture computes the viscosity solution without any numerical smoothing effect.

\begin{figure}[ht]
\begin{minipage}[b]{.49\linewidth}
  \centering
    \includegraphics[width=.99\textwidth]{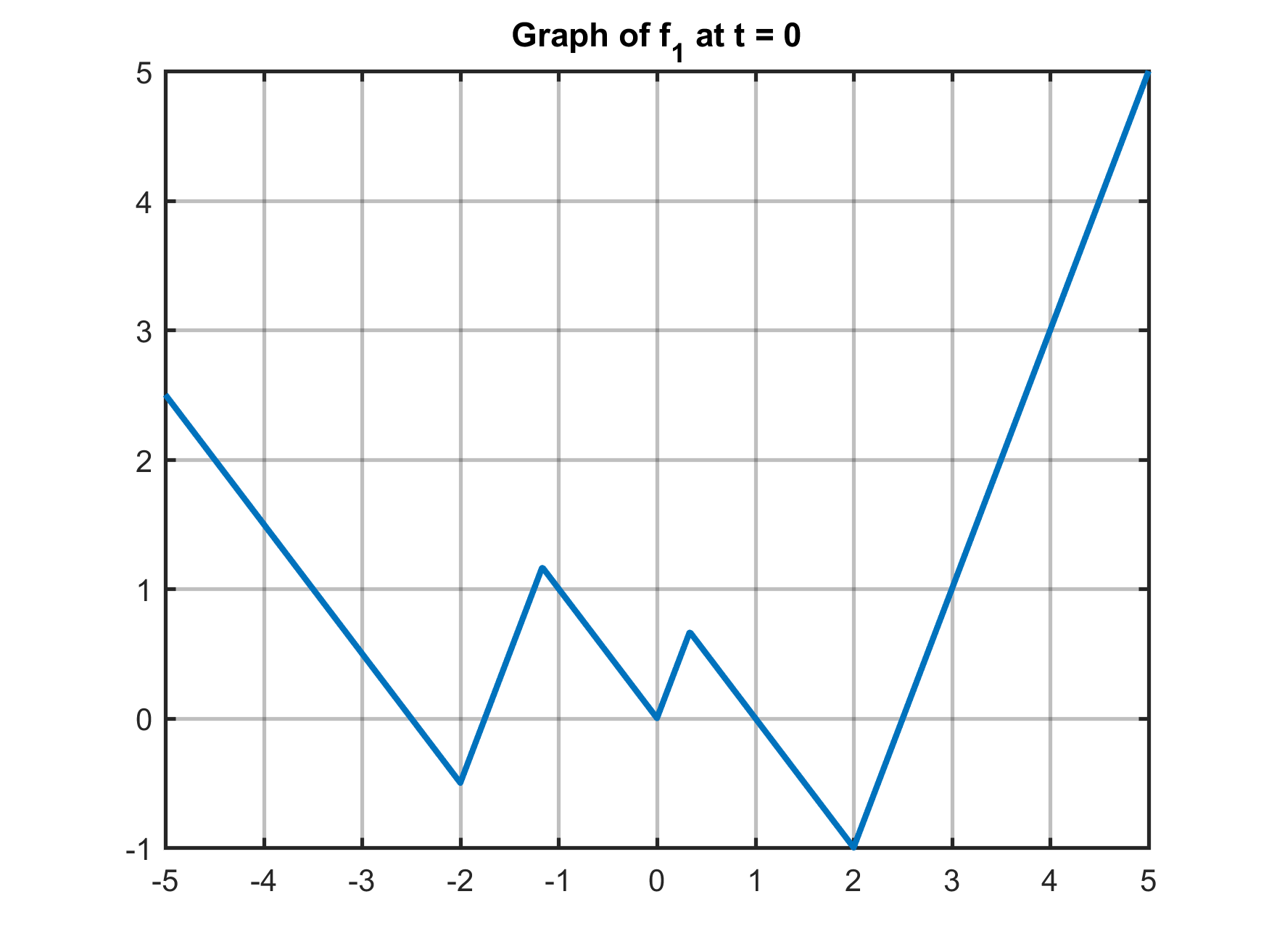}
  \centerline{\footnotesize{(a)}}\medskip
\end{minipage}
\hfill
\begin{minipage}[b]{.49\linewidth}
  \centering
\includegraphics[width=.99\textwidth]{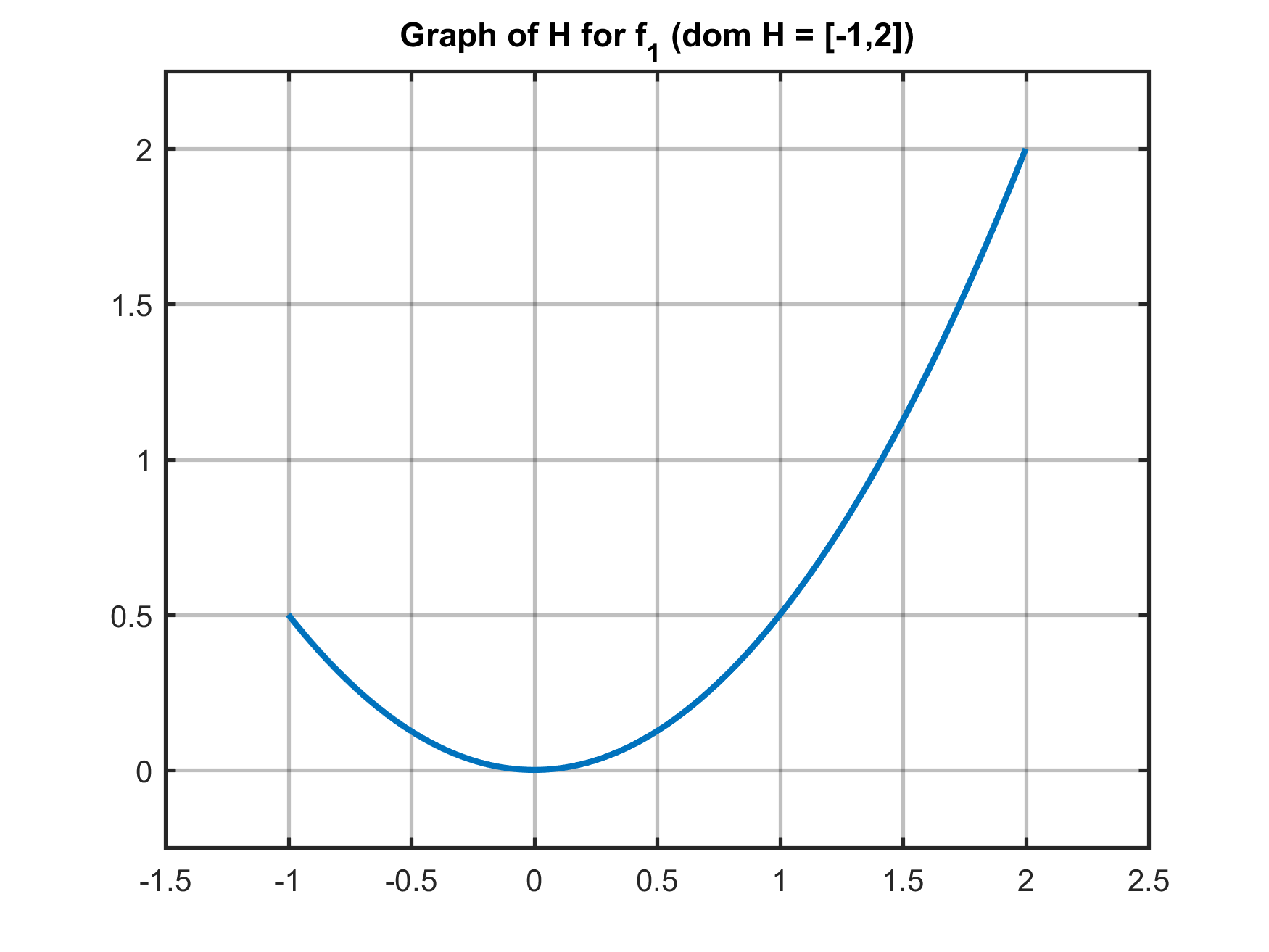}
  \centerline{\footnotesize{(b)}}\medskip
\end{minipage}
\begin{minipage}[b]{.49\linewidth}
  \centering
    \includegraphics[width=.99\textwidth]{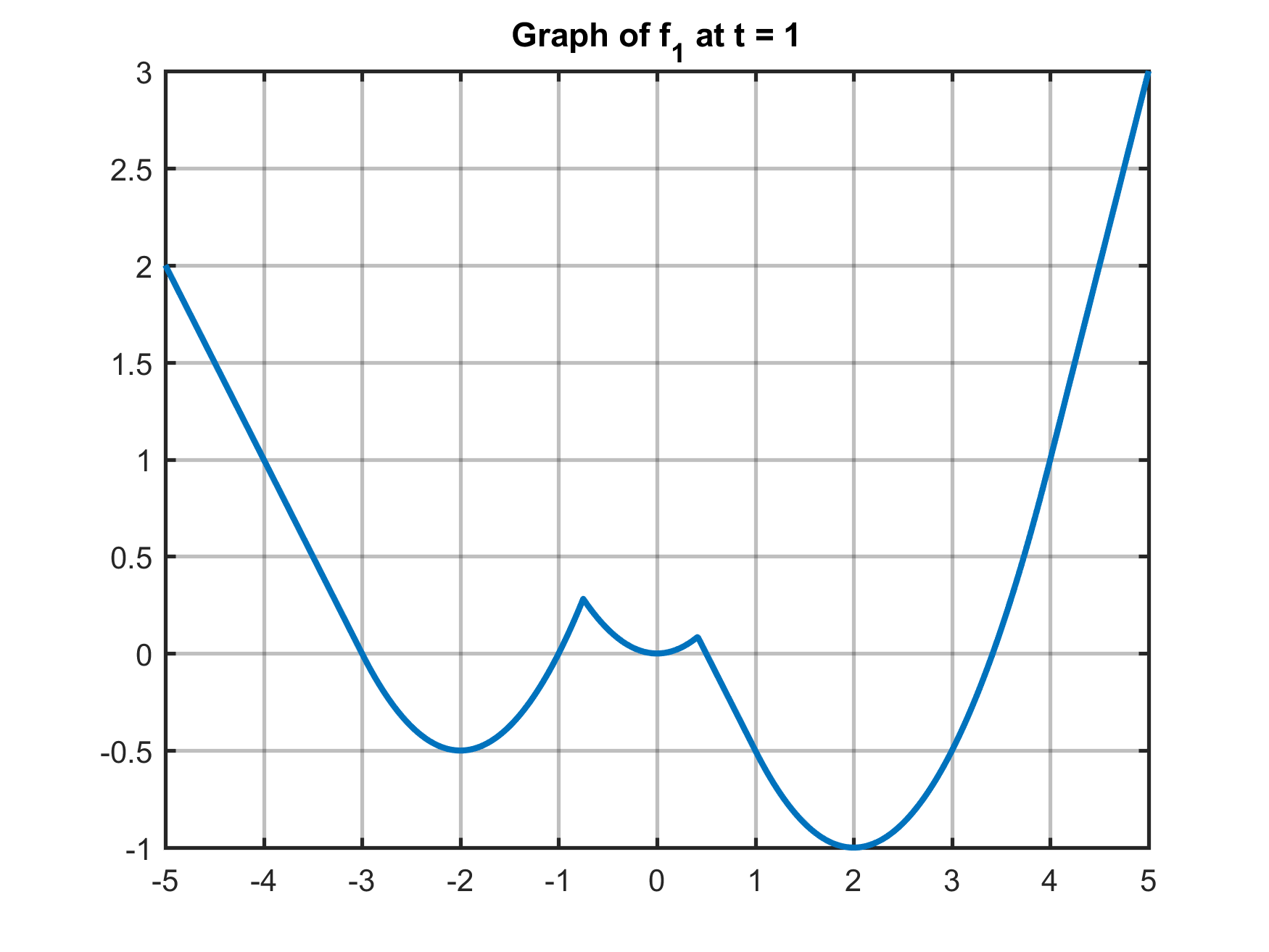}
  \centerline{\footnotesize{(c)}}\medskip
\end{minipage}
\hfill
\begin{minipage}[b]{.49\linewidth}
  \centering
    \includegraphics[width=.99\textwidth]{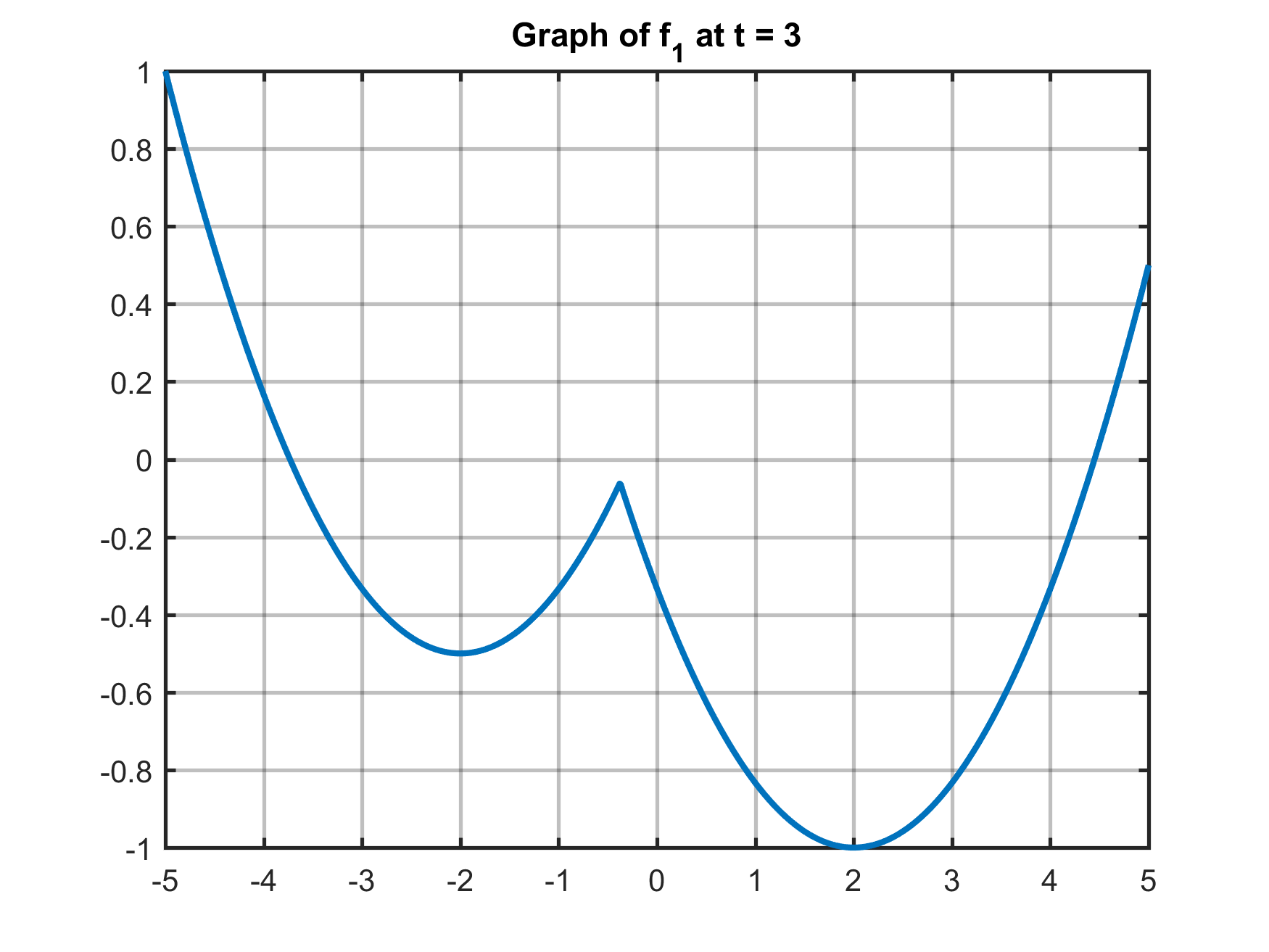}
  \centerline{\footnotesize{(d)}}\medskip
\end{minipage}
\caption{The graph of $f_1$ in example \ref{eg:f1_1}. The figures (a) and (b) show the initial value $\JHJ$ and the Hamiltonian $\HHJ$, respectively. The figures (c) and (d) show the solution $S = f_1$ evaluated at $t = 1$ and $t = 3$, respectively.
\label{fig: egf1}}
\end{figure}
\end{example}

\begin{example} \label{eg:f1_2}
We now present a high dimensional example. To be specific, the dimension is set to be $n=10$, and the solution $f_1: \R^{10}\times [0,+\infty) \to \R$ is represented by a neural network with three neurons, i.e., $m=3$. The activation function $L$ is given by
\begin{equation*}
    L(\bx) = \max\{ \|\bx\|_2 - 1, 0\} = \begin{dcases}
    \|\bx\|_2-1 & \text{ if } \|\bx\|_2 >1,\\
    0 & \text{ if } \|\bx\|_2\leq 1.
    \end{dcases}
\end{equation*}
The corresponding Hamiltonian is given by
\begin{equation*}
    H(\bp) = L^*(\bp) = \begin{dcases}
\|\bp\|_2 & \text{ if }\|\bp\|_2\leq 1,\\
+\infty & \text{ if } \|\bp\|_2 > 1.
\end{dcases}
\end{equation*}
The parameters are chosen to be $\bu_1 = (-2,0,0,0,\dots,0)$, $\bu_2 = (2,-2,-1, 0,\dots,0)$, $\bu_3 = (0,2,0,0,\dots,0)$, $a_1 = -0.5$, $a_2 = 0$ and $a_3 = -1$. 

By definition \ref{def:asymptoticFunction} and straightforward computation, we obtain
$L'_\infty(\bd) = \|\bd\|_2$.
Hence, the initial condition for the corresponding HJ PDE is given by Eq.~\eqref{eqt:f1_t0}, which in this example reads
\begin{equation*}
J(\bx) = \min_{i \in \{1,2,3\}} \{\|\bx - \bu_i\|_2 + a_i\}.
\end{equation*}

The accompanying figure~\ref{fig: egf1_hd} shows the graph of of $f_1$ for a  2-dimensional slice.
To be specific, we fix $\bx = (x_1, x_2, 0, \dots, 0)$, and compute $f_1(\bx, t)$ at $t= 10^{-6}$, $1$, $3$ and $5$. Note that the formula \eqref{eqt:nn1} is not well-defined for $t=0$, hence we use a small number $10^{-6}$ instead.
In each figure, the color is given by the function value $f_1(\bx,t)$ and the x and y axes represent the variables $x_1$ and $x_2$, respectively. 
The solutions evaluated at $t=10^{-6}$, $t=1$, $t=3$ and $t=5$ are shown in (a), (b), (c) and (d), respectively. The viscosity solution in this example is not a classical solution.
Note that there are several kinks on some level curves of the solution in each figure in Fig.~\ref{fig: egf1_hd}. Recall that the non-smoothness of the level curves implies the non-smoothness of the function. It can be seen from the non-smoothness of the level curves in Fig.~\ref{fig: egf1_hd} that our proposed architecture computes the viscosity solution without any numerical smoothing effect.

\begin{figure}[ht]
\begin{minipage}[b]{.49\linewidth}
  \centering
  \includegraphics[width=.99\textwidth]{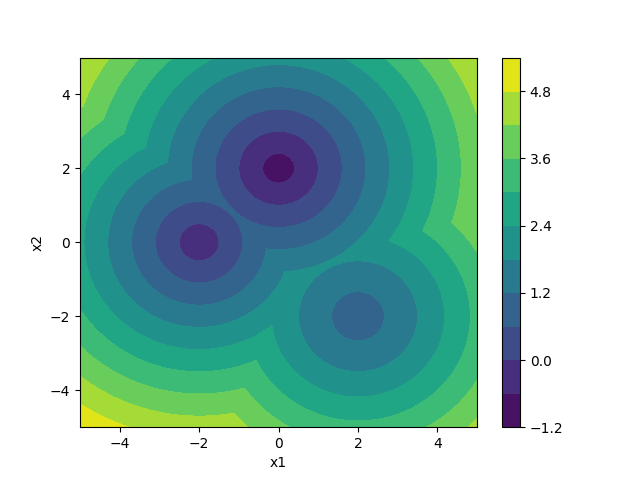}
  \centerline{\footnotesize{(a)}}\medskip
\end{minipage}
\hfill
\begin{minipage}[b]{.49\linewidth}
  \centering
  \includegraphics[width=.99\textwidth]{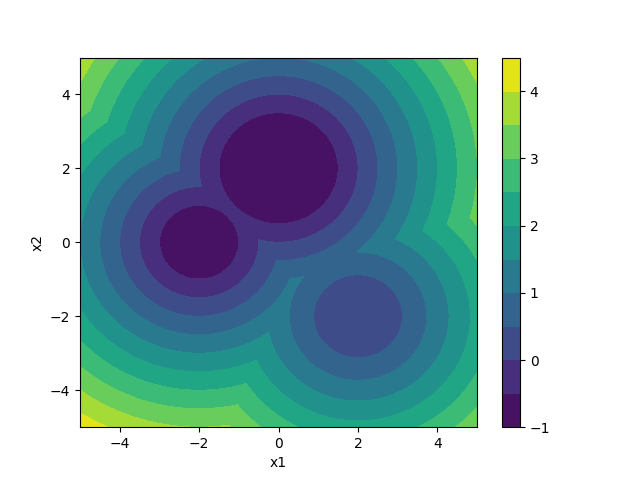}
  \centerline{\footnotesize{(b)}}\medskip
\end{minipage}
\begin{minipage}[b]{.49\linewidth}
  \centering
    \includegraphics[width=.99\textwidth]{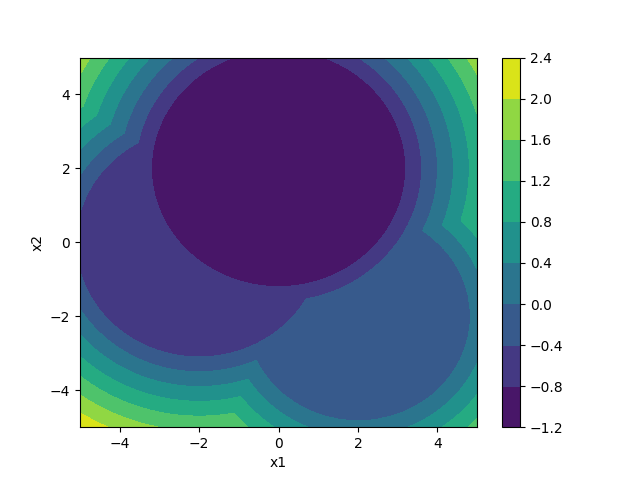}
  \centerline{\footnotesize{(c)}}\medskip
\end{minipage}
\hfill
\begin{minipage}[b]{.49\linewidth}
  \centering
    \includegraphics[width=.99\textwidth]{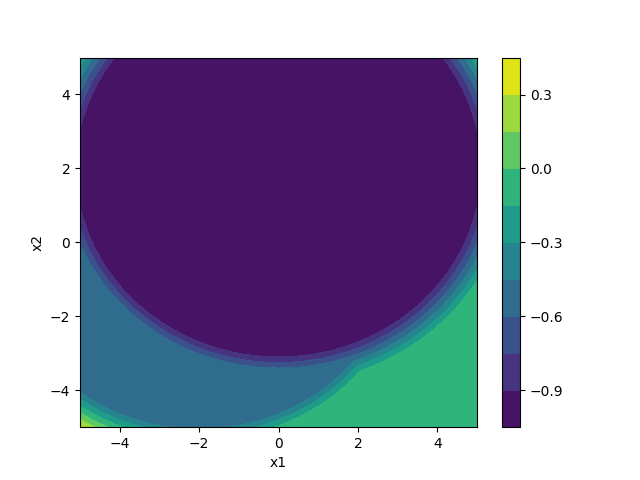}
  \centerline{\footnotesize{(d)}}\medskip
\end{minipage}
\caption{A two dimensional slice of
the graph of $f_1$ in example~\ref{eg:f1_2}. In each figure, the x and y axes correspond to the variables $x_1$ and $x_2$, which are the first and second coordinates of the variable $\bx = (x_1, x_2, 0,\dots, 0)$.
The color is given by the function value $f_1(\bx, t)$.
The figures (a), (b), (c) and (d) show contour lines of the solution $f_1(\bx, t)$ at $t=10^{-6}$, $t=1$, $t=3$ and $t=5$, respectively. 
\label{fig: egf1_hd}}
\end{figure}

\end{example}
\subsection{The second architecture} \label{sec:secondstructure}
In this part, we analyze the second neural network architecture given by Eq.~\eqref{eqt:nn2}.
Here, we assume the parameters $\{(\bv_i, b_i)\}_{i=1}^m$ satisfy the following assumption
\begin{itemize}
    \item[(H)] There exists a convex function $\ell\colon \Rn\to \R$ satisfying $\ell(\bv_i) = b_i$ for all $i\in \{1,\dots, m\}$.
\end{itemize}
Under this assumption, we present the following main theorem which states that the second architecture gives a viscosity solution to the corresponding HJ PDE, where the initial data is given by the activation function $\JNN$ in the neural network, and the Hamiltonian is a convex piecewise affine function determined by the parameters $\{(\bv_i,b_i)\}_{i=1}^m$.
\begin{thm}\label{thm:2}
Assume the function $\JNN\colon \Rn\to \R$ is a concave function and the assumption (H) is satisfied. Let $f_2$ be the function defined in \eqref{eqt:nn2}. Then $f_2 = \SLO$, where $\SLO$ is the Lax--Oleinik formula defined by \eqref{eqt:LOformula} with initial condition $\JHJ = \JNN$ and the Hamiltonian $\HHJ$ defined by 
\be\label{eqt:thm2_H_def}
\HHJ(\bp) = \max_{i\in\{1,\dots, m\}} \left\{ \langle \bp, \bv_i\rangle - b_i \right\}, 
\ee
for every $\bp\in\Rn$. Hence $f_2$ is a concave viscosity solution to the corresponding HJ PDE \eqref{eqt:HJ}.
\end{thm}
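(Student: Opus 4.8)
The plan is to establish the pointwise identity $f_2 = \SLO$ by first computing the Fenchel--Legendre transform $\HHJ^*$ of the piecewise affine Hamiltonian \eqref{eqt:thm2_H_def}, and then using the concavity of $\JNN$ to collapse the continuous infimum in the Lax--Oleinik formula \eqref{eqt:LOformula} into the discrete minimum defining $f_2$.

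First I would compute $\HHJ^*$. Observe that $\HHJ(\bp) = \max_i\{\langle \bp, \bv_i\rangle - b_i\}$ is exactly the Fenchel--Legendre transform of the function $g\colon \Rn \to \R\cup\{+\infty\}$ taking the value $b_i$ at each $\bv_i$ and $+\infty$ elsewhere; hence $\HHJ = g^*$ and $\HHJ^* = g^{**} = \cobar g$. Since $g$ is finite at only finitely many points, its closed convex hull is the lower convex envelope of the data $\{(\bv_i,b_i)\}$, so $\dom \HHJ^* = \conv\{\bv_1,\dots,\bv_m\}$ and, for $\bv$ in this polytope,
\begin{equation*}
\HHJ^*(\bv) = \inf\Big\{\textstyle\sum_{i=1}^m \alpha_i b_i :\ \alpha\in\unitsim_m,\ \sum_{i=1}^m \alpha_i \bv_i = \bv\Big\}.
\end{equation*}
Here I would invoke assumption (H): the convex function $\ell$ with $\ell(\bv_i)=b_i$ gives, for any representation $\bv_i = \sum_j \alpha_j \bv_j$, the inequality $b_i = \ell(\bv_i) \le \sum_j \alpha_j \ell(\bv_j) = \sum_j \alpha_j b_j$, so that $\HHJ^*(\bv_i) = b_i$; this guarantees each parameter pair $(\bv_i,b_i)$ genuinely lies on the graph of $\HHJ^*$, i.e. that $\HHJ$ is consistently the piecewise affine Hamiltonian read off from the neurons.

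Next I would substitute this formula into \eqref{eqt:LOformula} and merge the infimum over $\bv$ with the infimum defining $\HHJ^*$. Because every $\alpha\in\unitsim_m$ determines $\bv = \sum_i \alpha_i \bv_i$, the two infima collapse into one and, writing $\bx - t\bv = \sum_i \alpha_i(\bx - t\bv_i)$, this yields
\begin{equation*}
\SLO(\bx,t) = \inf_{\alpha\in\unitsim_m} \Big\{\JNN\Big(\textstyle\sum_{i=1}^m \alpha_i (\bx - t\bv_i)\Big) + t\sum_{i=1}^m \alpha_i b_i\Big\}.
\end{equation*}
The upper bound $\SLO \le f_2$ then follows by restricting to the vertices $\alpha = e_i$ of the simplex. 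For the reverse bound I would use that $\JNN$ is concave: Jensen's inequality gives $\JNN(\sum_i \alpha_i(\bx - t\bv_i)) \ge \sum_i \alpha_i \JNN(\bx - t\bv_i)$, so each bracket is bounded below by the convex combination $\sum_i \alpha_i\{\JNN(\bx-t\bv_i) + tb_i\}$, which is in turn at least $\min_i\{\JNN(\bx-t\bv_i)+tb_i\} = f_2(\bx,t)$. Combining the two bounds gives $f_2 = \SLO$.

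Finally, since $f_2 = \SLO$, the viscosity--solution property follows verbatim from the argument already used for Theorem~\ref{thm:1} (namely \cite[Sec.~10.3.4, Thm.~3]{evans1998partial}), and concavity of $\bx\mapsto f_2(\bx,t)$ is immediate from the equality $f_2 = \SLO$ together with the fact that the Lax--Oleinik value is an infimum of the concave maps $\bx\mapsto \JNN(\bx - t\bv)+t\HHJ^*(\bv)$. The main obstacle I anticipate is the $\HHJ^*$ computation: one must correctly identify the biconjugate as the lower convex envelope, justify that the convex hull of finitely many points is already closed so that no additional closure is required, and pin down its effective domain as the polytope $\conv\{\bv_i\}$; once this is in hand, the collapse of the infimum and the Jensen step are both short.
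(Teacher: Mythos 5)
Your proposal is correct and follows essentially the same route as the paper's proof: the simplex-minimization formula for $\HHJ^*$ with $\HHJ^*(\bv_i)=b_i$ guaranteed by assumption (H), the lower bound $\SLO\geq f_2$ via Jensen's inequality from the concavity of $\JNN$, the upper bound by evaluating at the points $\bv_i$, and then Evans's argument for the viscosity property together with the min-of-concave-functions observation for concavity. The only difference is cosmetic: where the paper cites \cite[Lem.~3.1]{darbon2019overcoming} for the formula for $\HHJ^*$, you derive it yourself as the biconjugate of the function equal to $b_i$ at $\bv_i$ and $+\infty$ elsewhere, correctly flagging the closedness of the convex hull of the finitely many vertical half-lines as the point needing justification.
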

\begin{proof}
By assumption (H) and simply changing the notations in \cite[Lem.~3.1]{darbon2019overcoming}, we have
\be\label{eqt:thm2_Hstar}
H^*(\bv)= \min\left\{\sum_{i=1}^m \alpha_i b_i \colon (\alpha_1,\dots,\alpha_m)\in\unitsim_m, \, \sum_{i=1}^m \alpha_i\bv_i =\bv\right\},
\ee
for each $\bv\in\co\{\bv_1,\dots, \bv_m\} = \dom H^*$, where $\unitsim_m$ is the unit simplex defined in \eqref{eqt:def_unitsimplex}. Also, we have $H^*(\bv_k) = b_k$ for each $k\in\{1, \dots, m\}$.

For each $\bx\in \Rn$, $t>0$ and $\bv\in\co\{\bv_1,\dots, \bv_m\}$, let $\boldsymbol{\alpha} = (\alpha_1,\dots, \alpha_m)\in \unitsim_m$ be the minimizer in the minimization problem in \eqref{eqt:thm2_Hstar} evaluated at $\bv$. In other words, we have 
\begin{equation}\label{eqt:thm2_eqts}
    \sum_{i=1}^m \alpha_i = 1, \quad
    \sum_{i=1}^m \alpha_i\bv_i =\bv, \quad
    \sum_{i=1}^m \alpha_i b_i = H^*(\bv), \quad 
    \text{ and }\alpha_j\in[0,1] \text{ for each } j\in \{1,\dots,m\}.
\end{equation}
Then, by \eqref{eqt:thm2_eqts} and the assumption that $\JHJ = \JNN$ is concave, we have
\benn
\begin{split}
&\JHJ(\bx - t\bv) + tH^*(\bv)
= \JHJ\left(\sum_{i=1}^m\alpha_i\left(\bx - t\bv_i\right)\right) + t\sum_{i=1}^m\alpha_i b_i
\geq \sum_{i=1}^m \alpha_i\JHJ\left(\bx - t\bv_i\right) + \sum_{i=1}^m\alpha_i tb_i\\
=& \sum_{i=1}^m \alpha_i(\JHJ\left(\bx - t\bv_i\right) + tb_i)
\geq \min_{i\in\{1,\dots,m\}}\left\{\JHJ\left(\bx - t\bv_i\right) +tb_i\right\} = f_2(\bx,t).
\end{split}
\eenn
As a result, we conclude that
\benn
\SLO(\bx,t) = \inf_{\bv\in\dom H^*} \left\{\JHJ(\bx - t\bv) + tH^*(\bv)\right\}
\geq f_2(\bx,t).
\eenn
On the other hand, recall that $b_k = H^*(\bv_k)$ for each $k\in\{1,\dots, m\}$, hence we obtain
\benn
f_2(\bx,t) = \min_{i\in\{1,\dots,m\}}\left\{\JHJ\left(\bx - t\bv_i\right) +tH^*(\bv_i)\right\}\geq 
\inf_{\bv\in\Rn} \left\{\JHJ(\bx - t\bv) + tH^*(\bv)\right\}
= \SLO(\bx,t).
\eenn
Therefore, we conclude that $f_2(\bx,t) = \SLO(\bx,t)$ for each $\bx\in\Rn$ and $t>0$. 

Note that $H$ is a convex function, since it is the maximum of affine functions. Then, by the same proof as in \cite[Sec.~10.3.4, Thm.~3]{evans1998partial}, we conclude that $f_2$ is a viscosity solution to the corresponding HJ PDE. Moreover, since $\JNN$ is concave, $f_2$ is the minimum of concave functions, which implies the concavity of $f_2$.
\end{proof}

\begin{rem}
In the second architecture, if we furthermore assume that the initial condition $\JHJ=\JNN$ is uniformly Lipschitz, then $f_2$ is the unique uniformly continuous viscosity solution to the corresponding HJ PDE. This conclusion directly follows from \cite[Thm.~2.1]{bardi1984hopf}.
\end{rem}

\begin{example} \label{eg:f2_1d}
Here, we provide a one dimensional example of the function $f_2$. To be specific, we consider $f_2\colon \R\times [0,+\infty)\to \R$ represented by the neural network in Fig.~\ref{fig: nn2} with three neurons, i.e., we set $n=1$ and $m=3$. The initial value is given by $\JHJ(x) = -\frac{x^2}{2}$ for each $x\in\R$, and the Hamiltonian $H$ is given by the piecewise affine function in Eq.~\eqref{eqt:thm2_H_def} with $(v_1,b_1) = (-2,0.5)$, $(v_2,b_2) = (0,-5)$ and $(v_3,b_3) = (2,1)$. The functions $\JHJ$, $\HHJ$ and $f_2$ are shown in Fig.~\ref{fig: egf2}, where (a) shows the initial value $\JHJ$, (b) shows the convex Hamiltonian $\HHJ$, and (c) and (d) show the solution $S = f_2$ evaluated at $t = 1$ and $t = 3$, respectively. One can observe that there are several kinks on the graph of the solution shown in Fig.~\ref{fig: nn2}~(c) and~(d), which implies that the solution given by the proposed neural network architecture is not differentiable at these kinks. In other words, the proposed architecture provides the viscosity solution to the HJ PDE without any numerical smoothing effect.

\begin{figure}[ht]
\begin{minipage}[b]{.49\linewidth}
  \centering
  \includegraphics[width=.99\textwidth]{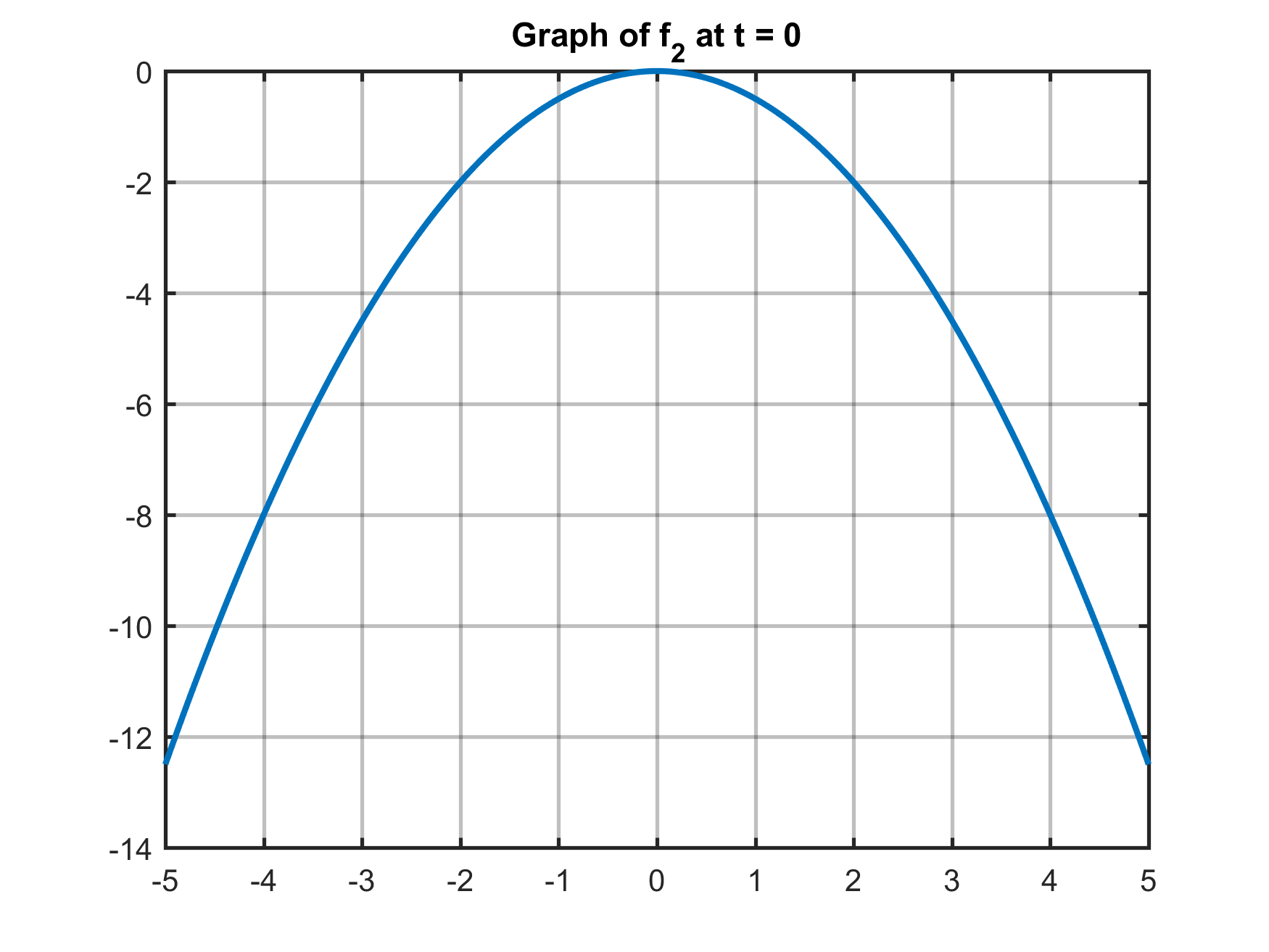}
  \centerline{\footnotesize{(a)}}\medskip
\end{minipage}
\hfill
\begin{minipage}[b]{.49\linewidth}
  \centering
  \includegraphics[width=.99\textwidth]{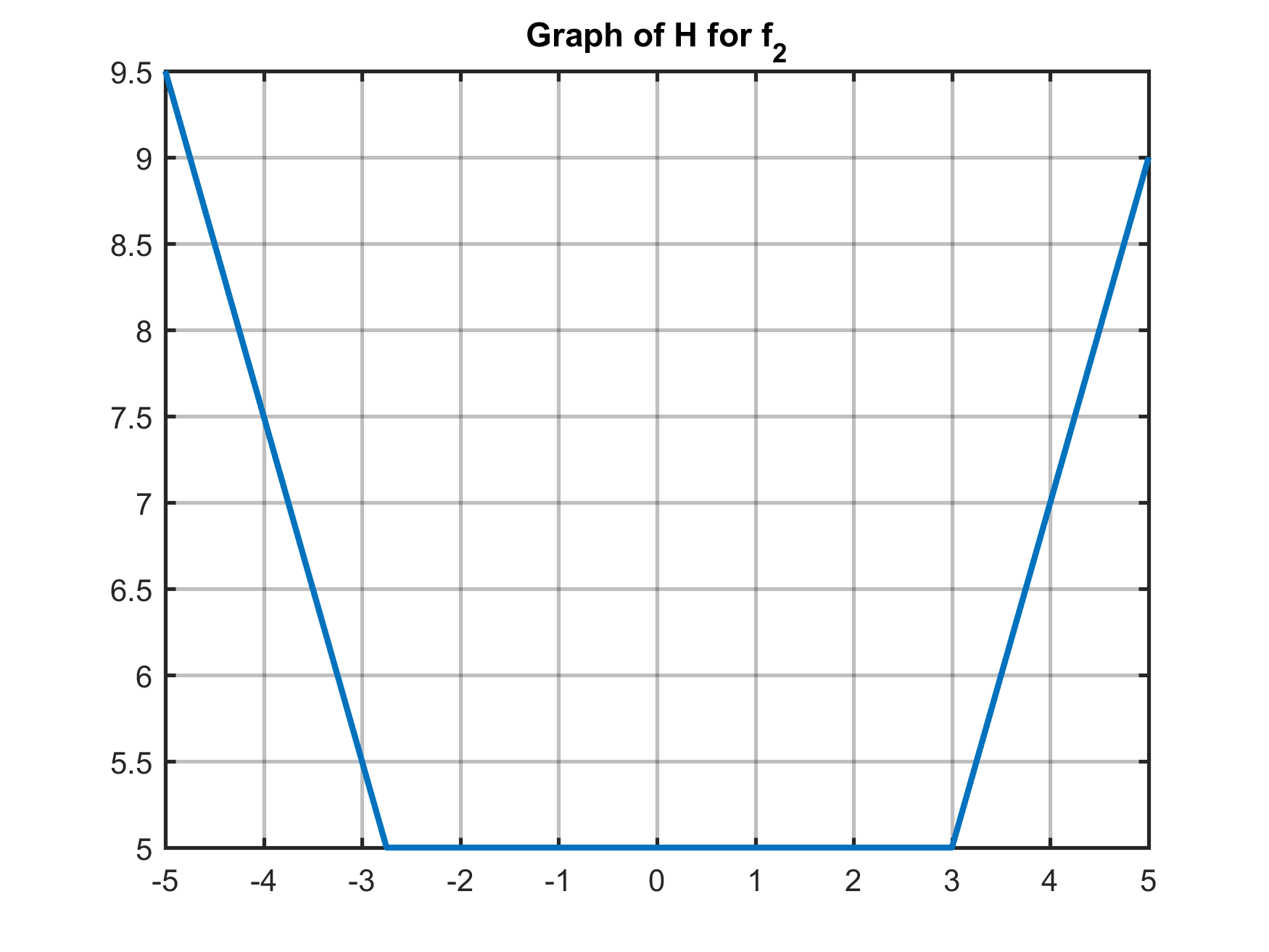}
  \centerline{\footnotesize{(b)}}\medskip
\end{minipage}
\begin{minipage}[b]{.49\linewidth}
  \centering
    \includegraphics[width=.99\textwidth]{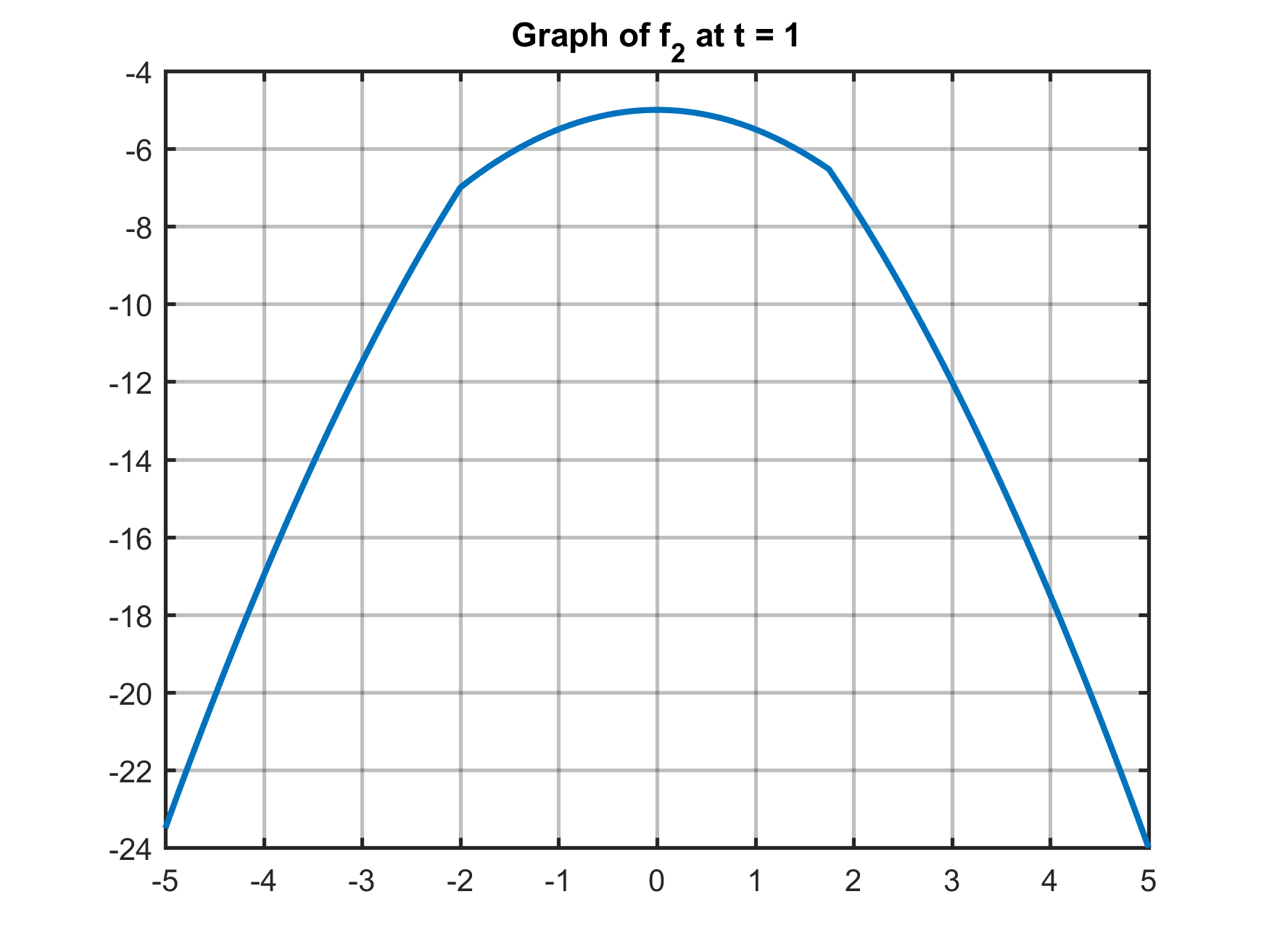}
  \centerline{\footnotesize{(c)}}\medskip
\end{minipage}
\hfill
\begin{minipage}[b]{.49\linewidth}
  \centering
    \includegraphics[width=.99\textwidth]{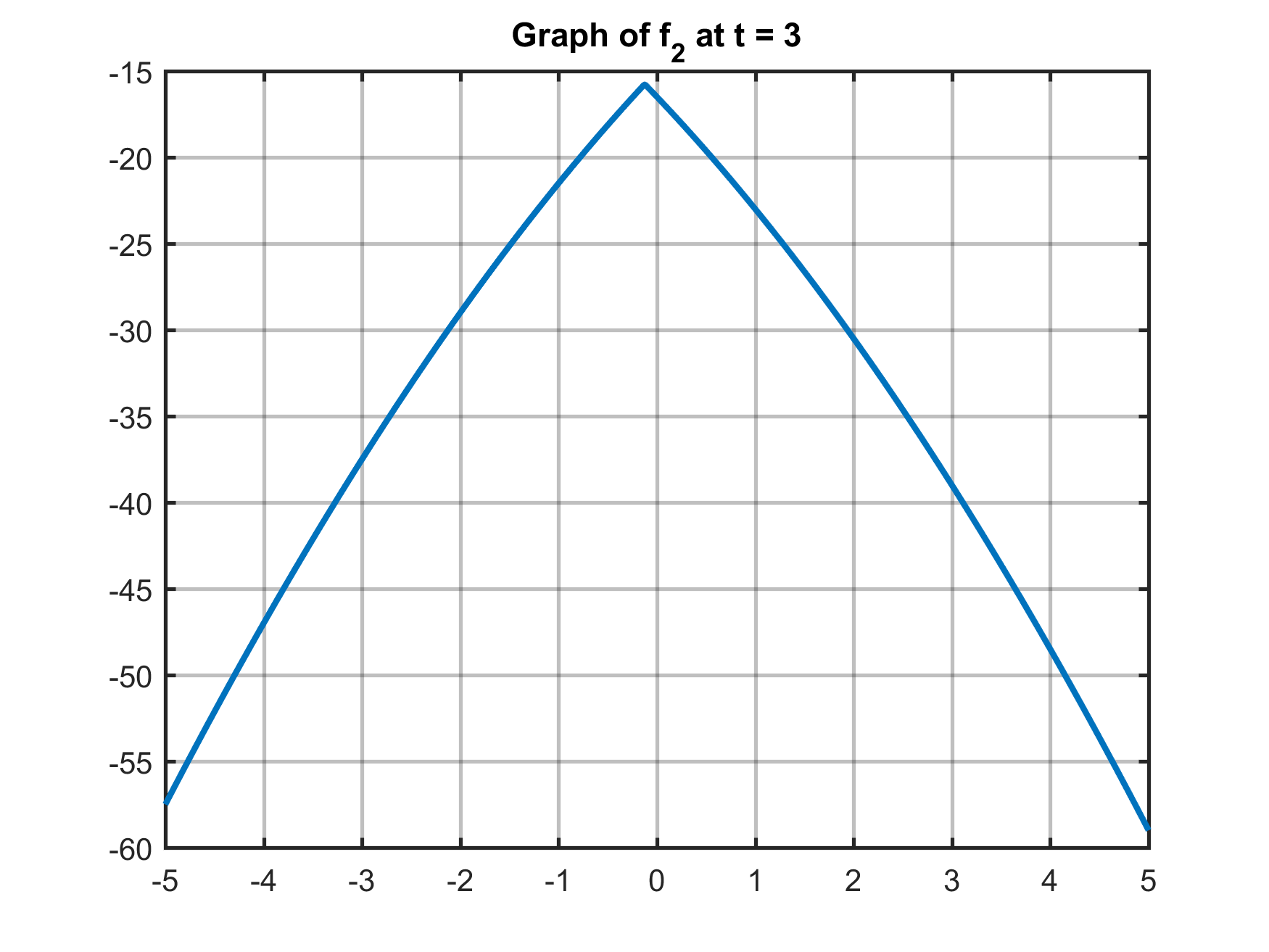}
  \centerline{\footnotesize{(d)}}\medskip
\end{minipage}
\caption{The graph of $f_2$ in example \ref{eg:f2_1d}. The figures (a) and (b) show the initial value $\JHJ$ and the Hamiltonian $\HHJ$, respectively. The figures (c) and (d) show the solution $S = f_2$ evaluated at $t = 1$ and $t = 3$, respectively. \label{fig: egf2}}
\end{figure}
\end{example}

\begin{example} \label{eg: f2_hd}

\begin{figure}[htbp]
\begin{minipage}[b]{.49\linewidth}
  \centering
  \includegraphics[width=.99\textwidth]{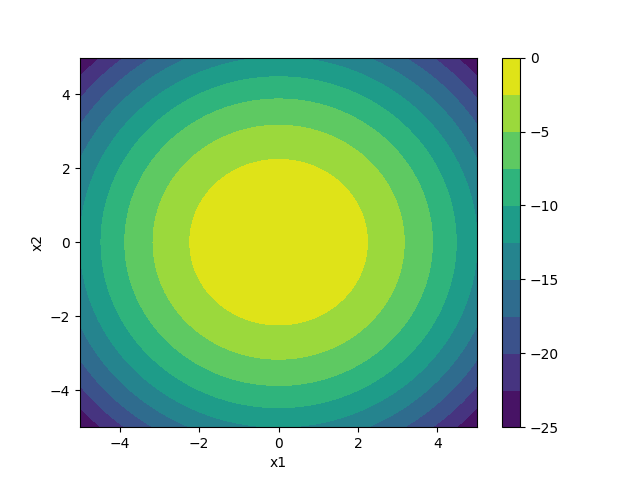}
  \centerline{\footnotesize{(a)}}\medskip
\end{minipage}
\hfill
\begin{minipage}[b]{.49\linewidth}
  \centering
  \includegraphics[width=.99\textwidth]{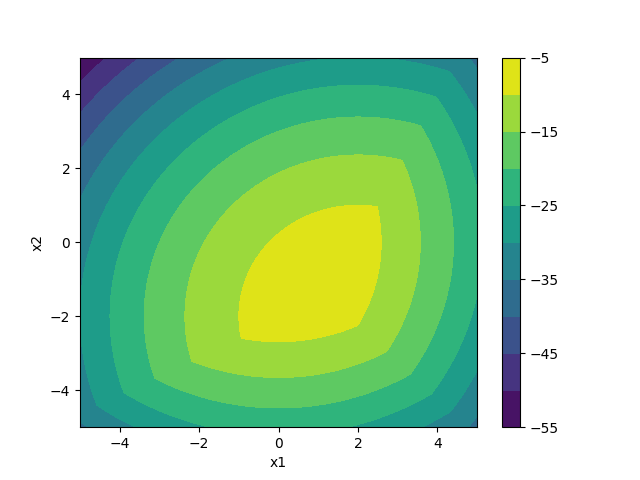}
  \centerline{\footnotesize{(b)}}\medskip
\end{minipage}
\begin{minipage}[b]{.49\linewidth}
  \centering
    \includegraphics[width=.99\textwidth]{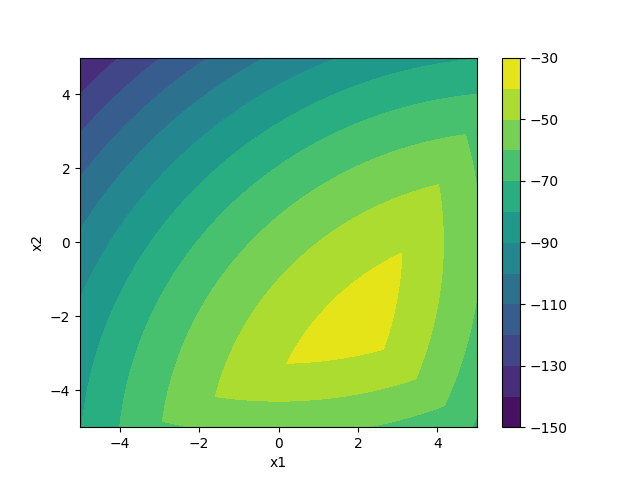}
  \centerline{\footnotesize{(c)}}\medskip
\end{minipage}
\hfill
\begin{minipage}[b]{.49\linewidth}
  \centering
    \includegraphics[width=.99\textwidth]{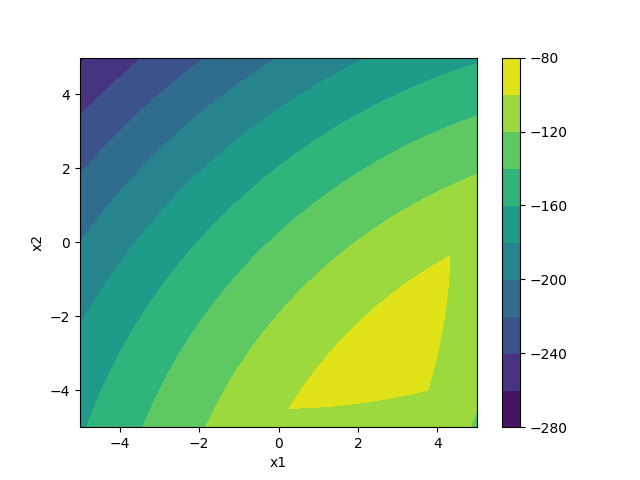}
  \centerline{\footnotesize{(d)}}\medskip
\end{minipage}
\caption{A two dimensional slice of
the graph of $f_2$ in example \ref{eg: f2_hd}. In each figure, the x and y axes correspond to the variables $x_1$ and $x_2$, which are the first and second coordinates of the variable $\bx = (x_1, x_2, 0,\dots, 0)$.
The color is given by the function value $f_2(\bx, t)$.
The figures (a), (b), (c) and (d) show contour lines of the solution $f_2(\bx, t)$ at $t=0$, $t=1$, $t=3$ and $t=5$, respectively. 
\label{fig: egf2_hd}}
\end{figure}

Here, we present a high dimensional example. We choose the dimension to be $n=10$. We consider the solution $f_2: \R^{10}\times [0,+\infty) \to \R$ represented by the neural network in Fig.~\ref{fig: nn2} with three neurons, i.e., we set $m=3$. Similar to the one dimensional case, the activation function $\tilde{J}$ is chosen to be $\tilde{J}(\bx) = -\frac{\|\bx\|_2^2}{2}$ for every $\bx \in \R^{10}$. Hence, by Thm.~\ref{thm:2}, the initial data in the corresponding HJ PDE is given by $J(\bx) = \tilde{J}(\bx) = - \frac{\|\bx\|_2^2}{2}$. The parameters are chosen to be $\bv_1 = (-2,0,0,0,\dots,0)$, $\bv_2 = (2,-2,-1, 0,\dots,0)$, $\bv_3 = (0,2,0,0,\dots,0)$, $b_1 = 0.5$, $b_2 = -5$ and $b_3 = 1$.
Then the Hamiltonian is the corresponding convex piecewise affine function defined in \eqref{eqt:thm2_H_def}.
 
The solution $f_2$ is shown in Fig.~\ref{fig: egf2_hd}. We fix $\bx = (x_1, x_2, 0, \dots, 0)$ and compute $f_2(\bx, t)$ for $t=0$, $1$, $3$ and $5$. In each figure, the color is given by the function value $f_2(\bx,t)$ and the x and y axes represent the variables $x_1$ and $x_2$, respectively. 
The solutions at $t=0$, $t=1$, $t=3$ and $t=5$ are shown in (a), (b), (c) and (d), respectively.
Again, we observe kinks on the level curves in Fig.~\ref{fig: egf2_hd}~(b-d). Therefore, the proposed neural network architecture computes the viscosity solution without numerical smoothing effect. 
\end{example}

\begin{example} \label{eg:f2_Hnorm}
In this example, we consider two HJ PDEs defined for $\bx\in \R^5$, i.e., the dimension is $n=5$.
The initial data $J$ is given by $J(\bx) = -\frac{\|\bx\|_2^2}{2}$ for each $\bx\in\R^5$ and the Hamiltonian $H$ is the $l^1$-norm or the $l^\infty$-norm. The corresponding solutions $f_2$ are shown in Figs.~\ref{fig: egf2_hd_1norm} and \ref{fig: egf2_hd_infnorm}. Similarly as in example \ref{eg: f2_hd}, we consider the variable $\bx = (x_1, x_2, 0, 0,0)$ and show the 2-dimensional slice in each figure. The solutions at $t=0$, $t=1$, $t=3$ and $t=5$ are shown in (a), (b), (c) and (d), respectively, in each figure.

When $H$ is the $l^1$-norm, i.e., $H(\bp) = \|\bp\|_1$ for each $\bp\in\R^5$, the Hamiltonian $H$ can be written in the form of Eq.~\eqref{eqt:thm2_H_def} with $m = 2^n$, $b_i = 0$ for each $i\in \{1,\dots, m\}$ and 
\begin{equation*}
    \{\bv_i\}_{i=1}^m = \{(w_1, w_2,\dots, w_n)\in\R^n\colon w_j \in \{\pm 1\} \, \forall j\in\{1,\dots, n\}\}.
\end{equation*}
The corresponding function $f_2$ is shown in Fig.~\ref{fig: egf2_hd_1norm}.

When $H$ is the $l^\infty$-norm, i.e., $H(\bp) = \|\bp\|_\infty$ for each $\bp\in\R^5$, the Hamiltonian $H$ can be written in the form of Eq.~\eqref{eqt:thm2_H_def} with $m = 2n$, $b_i = 0$ for each $i\in \{1,\dots, m\}$ and 
\begin{equation*}
    \{\bv_i\}_{i=1}^m = \{\pm \boldsymbol{e}_j\}_{j=1}^n,
\end{equation*}
where $\boldsymbol{e}_j$ is the $j-$th coordinate basis vector in $\R^n$.
The corresponding function $f_2$ is shown in Fig.~\ref{fig: egf2_hd_infnorm}.

We observe kinks on the level curves in Fig.~\ref{fig: egf2_hd_1norm}~(b-d) and Fig.~\ref{fig: egf2_hd_infnorm}~(b-d).
These numerical examples show that the proposed neural network architecture computes the viscosity solution to the HJ PDEs without any numerical smoothing effect.

\begin{figure}[ht]
\begin{minipage}[b]{.49\linewidth}
  \centering
  \includegraphics[width=.99\textwidth]{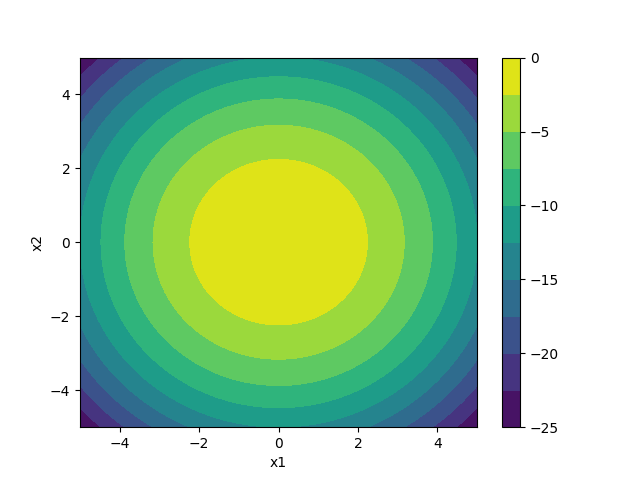}
  \centerline{\footnotesize{(a)}}\medskip
\end{minipage}
\hfill
\begin{minipage}[b]{.49\linewidth}
  \centering
  \includegraphics[width=.99\textwidth]{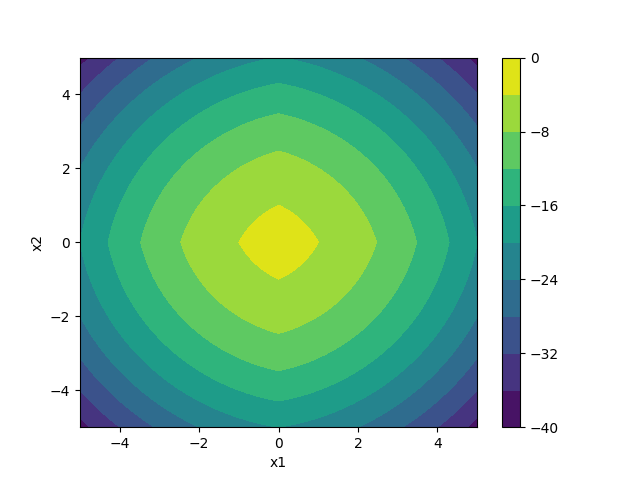}
  \centerline{\footnotesize{(b)}}\medskip
\end{minipage}
\begin{minipage}[b]{.49\linewidth}
  \centering
    \includegraphics[width=.99\textwidth]{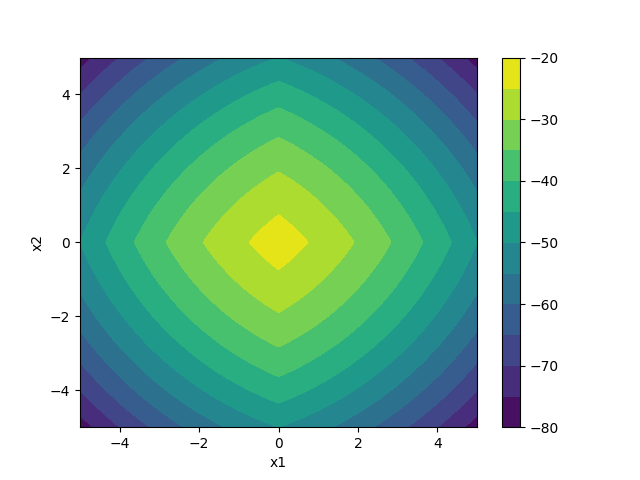}
  \centerline{\footnotesize{(c)}}\medskip
\end{minipage}
\hfill
\begin{minipage}[b]{.49\linewidth}
  \centering
    \includegraphics[width=.99\textwidth]{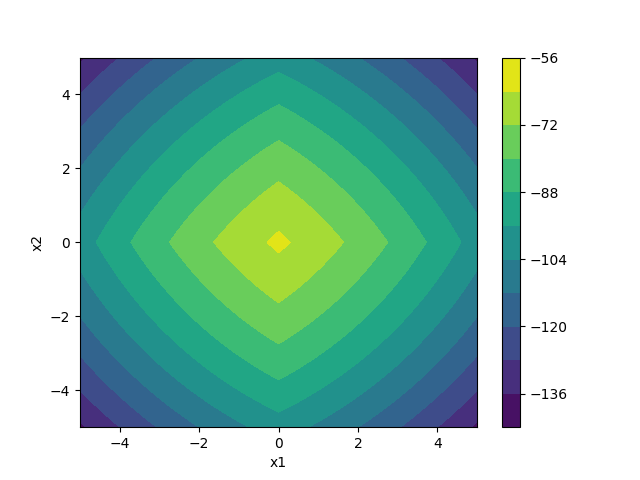}
  \centerline{\footnotesize{(d)}}\medskip
\end{minipage}
\caption{A two dimensional slice of
the graph of $f_2$ in example \ref{eg:f2_Hnorm}. The initial data $J$ is given by $J(\bx) = -\frac{\|\bx\|_2^2}{2}$ and the Hamiltonian $H$ is the $l^1$ norm. 
In each figure, the x and y axes correspond to the variables $x_1$ and $x_2$, which are the first and second coordinates of the variable $\bx = (x_1, x_2, 0,\dots, 0)$.
The color is given by the function value $f_2(\bx, t)$.
The figures (a), (b), (c) and (d) show contour lines of the solution $f_2(\bx, t)$ at $t=0$, $t=1$, $t=3$ and $t=5$, respectively. 
\label{fig: egf2_hd_1norm}}
\end{figure}

\begin{figure}[ht]
\begin{minipage}[b]{.49\linewidth}
  \centering
  \includegraphics[width=.99\textwidth]{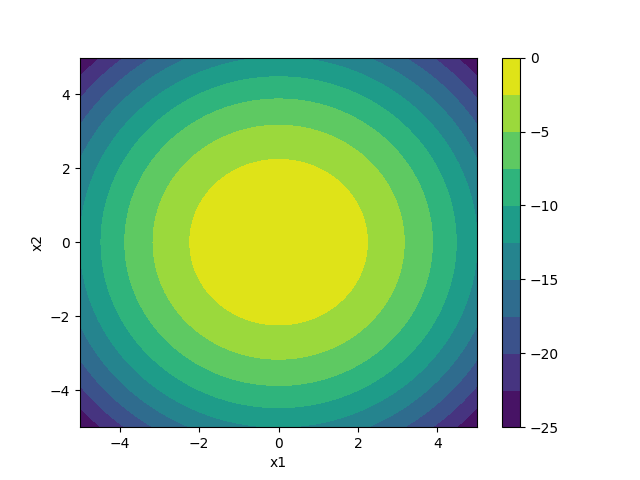}
  \centerline{\footnotesize{(a)}}\medskip
\end{minipage}
\hfill  
\begin{minipage}[b]{.49\linewidth}
  \centering
  \includegraphics[width=.99\textwidth]{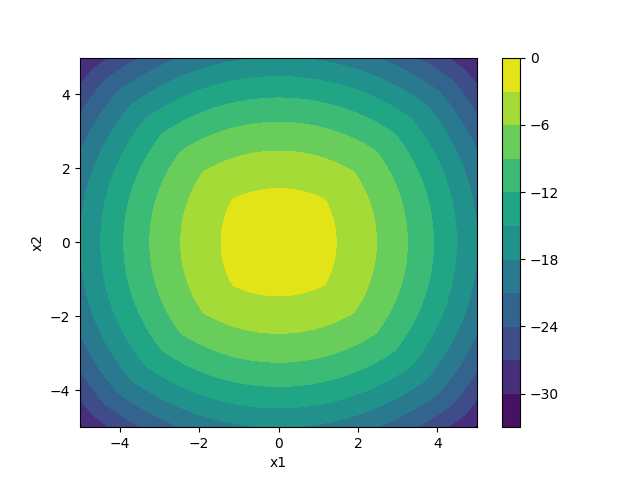}
  \centerline{\footnotesize{(b)}}\medskip
\end{minipage}
\begin{minipage}[b]{.49\linewidth}
  \centering
    \includegraphics[width=.99\textwidth]{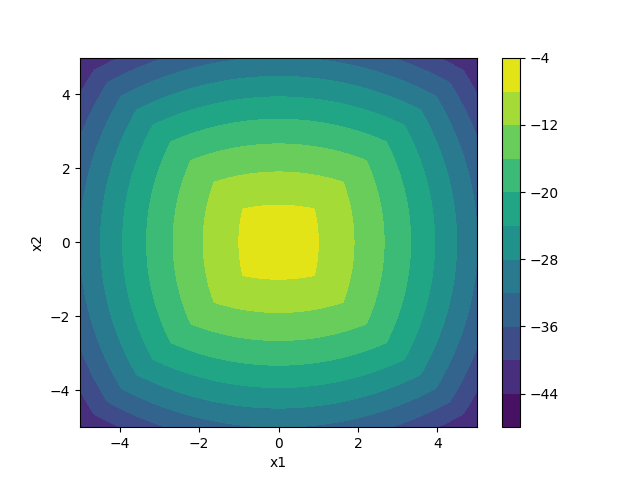}
  \centerline{\footnotesize{(c)}}\medskip
\end{minipage}
\hfill
\begin{minipage}[b]{.49\linewidth}
  \centering
    \includegraphics[width=.99\textwidth]{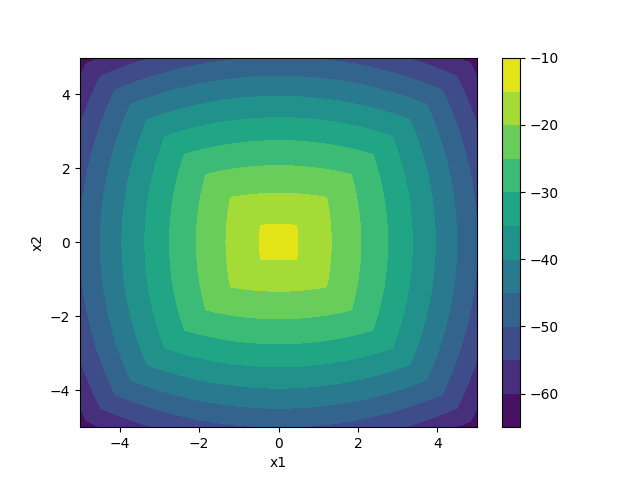}
  \centerline{\footnotesize{(d)}}\medskip
\end{minipage}
\caption{A two dimensional slice of
the graph of $f_2$ in example \ref{eg:f2_Hnorm}. The initial data $J$ is given by $J(\bx) = -\frac{\|\bx\|_2^2}{2}$ and the Hamiltonian $H$ is the $l^\infty$ norm. 
In each figure, the x and y axes correspond to the variables $x_1$ and $x_2$, which are the first and second coordinates of the variable $\bx = (x_1, x_2, 0,\dots, 0)$.
The color is given by the function value $f_2(\bx, t)$.
The figures (a), (b), (c) and (d) show contour lines of the solution $f_2(\bx, t)$ at $t=0$, $t=1$, $t=3$ and $t=5$, respectively. 
\label{fig: egf2_hd_infnorm}}
\end{figure}
\end{example}

\section{Conclusion} \label{sec:conclusion}
In this paper, we investigated two neural network architectures shown in Figs.~\ref{fig: nn1} and \ref{fig: nn2}, and proved that these two architectures represent viscosity solutions to two sets of HJ PDEs whose convex Hamiltonian $\HHJ$ and initial data $\JHJ$ satisfy certain assumptions in Thms.~\ref{thm:1} and \ref{thm:2}, respectively. 
This connection provides a possible interpretation for some neural network architectures. 
Our results suggest that efficient dedicated hardware implementation for neural networks can be leveraged to compute  viscosity solutions of certain HJ PDEs. A future direction consists of implementing these neural networks on FPGA using Xilinx tools (e.g., Xilinx Vitis High Level Synthesis) to evaluate the performance of these FPGA implementations.

In this paper, we only consider the HJ PDEs whose Hamiltonian $H$ does not depend on the state variable $\bx$ and the time variable $t$. Out results suggest further research directions:  what kind of neural network architectures can be used to represent the viscosity solution to certain HJ PDEs whose Hamiltonian depends on $\bx$ or $t$? Note that a generalized Hopf-Lax formula for certain HJ PDEs with state dependent Hamiltonians is proposed in~\cite{Dragoni2007Metric}. However, this formula involves a distance function which is a solution to the Eikonal equation. Hence, it is not straightforward to design a neural network architecture using this representation formula. We propose to investigate novel representation formulas for these HJ PDEs that can be represented using neural networks.

\section*{Acknowledgments}
This research is supported by NSF DMS 1820821 and AFOSR MURI FA9550-20-1-0358. 

\bibliographystyle{model1-num-names}
\bibliography{biblist}

\end{document}